\def\rnew{\color{magenta}}
\def\cnew{\color{blue} }
\newtheorem{lemma}{Lemma}[section]
\newtheorem{theorem}[lemma]{Theorem}
\newtheorem{remark}[lemma]{Remark}
\def\RR{\rm \hbox{I\kern-.2em\hbox{R}}}
\def\NN{\rm \hbox{I\kern-.2em\hbox{N}}}
\def\ZZ{\rm {{\rm Z}\kern-.28em{\rm Z}}}
\def\CC{\rm \hbox{C\kern -.5em {\raise .32ex \hbox{$\scriptscriptstyle
|$}}\kern
-.22em{\raise .6ex \hbox{$\scriptscriptstyle |$}}\kern .4em}}
\def\<{\langle}
\def\>{\rangle}
\def\t{\tilde}
\def\e{\varepsilon}
\def\o{\overline}
\def\cT{{\mathcal T}}
\def\cO{{\mathcal O}}
\def\Chi{\raise .3ex
\hbox{\large $\chi$}} 
\def\lsima{\hbox{\kern -.6em\raisebox{-1ex}{$~\stackrel{\textstyle<}{\sim}~$}}\kern -.4em}
\def\lsim{\hbox{\kern -.2em\raisebox{-1ex}{$~\stackrel{\textstyle<}{\sim}~$}}\kern -.2em}
\def\[{\Bigl [}
\def\]{\Bigr ]}
\def\({\Bigl (}
\def\){\Bigr )}
\def\[{\Bigl [}
\def\]{\Bigr ]}
\def\({\Bigl (}
\def\){\Bigr )}
\def\R{\mathbb{R}}
\def\T{{\relax\ifmmode I\!\!\hspace{-1pt}T\else$I\!\!\hspace{-1pt}T$\fi}}
\def\lsim{\raisebox{-1ex}{$~\stackrel{\textstyle<}{\sim}~$}}
  \def\NN{N}                  
\def\t#1{\tilde{#1}}
\def\cO{{\mathcal O}}
\def\cN{{\mathcal N}}
\def\cT{{\mathcal T}}
\def\cL{{\mathcal L}}
\def\cK{{\mathcal K}}
\newcommand{\be}{\begin{equation}}
\newcommand{\ee}{\end{equation}}
\newcommand{\bea}{$$ \begin{array}{lll}}
\newcommand{\eea}{\end{array} $$}
\newcommand{\beqn}{\begin{equation}}
\newcommand{\eeqn}{\end{equation}}
\def\endproof{\hfill\rule{1.5mm}{1.5mm}\\[2mm]}
\newcommand\eref[1]{{\rm (\ref{#1})}}
\newcommand{\iref}[1]{{\rm (\ref{#1})}}
\def\int{\intop\limits}
\newcommand\cH{{\mathcal H}}
 \newcommand{\Tr}{\mathbb{T}_h}
\begin{document}

\title{Solving PDEs with Incomplete Information}
\date{\today}
\author{Peter Binev, Andrea Bonito, Albert Cohen, Wolfgang Dahmen \\ Ronald DeVore, and Guergana Petrova}
\thanks{This research was supported by the NSF Grants
DMS 2110811 (AB), DMS 2038080 (PB and WD),  DMS-2012469 (WD), DMS 21340077 (RD and GP), the MURI ONR Grant N00014-20-1-278 (RD and GP), the ARO Grant W911NF2010318 (PB), 
 and the  SFB 1481,
funded by the German Research Foundation (WD)}

\maketitle

  \begin{abstract} 
  We consider the problem of numerically approximating the solutions to a partial differential equation (PDE) when there is 
  insufficient information to determine a unique solution. Our main example is 
  the Poisson boundary value problem,  when the boundary data
  is unknown and instead one observes finitely many linear measurements
  of the solution.  We view this setting as an optimal recovery problem and develop theory
  and numerical algorithms for its solution. The main vehicle employed is the derivation and approximation
  of the Riesz representers of these functionals with respect to relevant Hilbert spaces of harmonic functions.
 \end{abstract}

\section{Introduction}
\label{S:intro}
  The questions we investigate  sit in the 
  broad research area of using measurements     
  to enhance the  numerical recovery of  the solution $u$ to a PDE. 
  The particular setting addressed in this paper is to numerically approximate the solution to an elliptic boundary value problem when there is insufficient information on the boundary value to determine a unique solution to the PDE. In place of complete boundary information, we have a finite number of  data observations of the solution $u$.  This data serves to narrow the set of possible solutions.  We ask what is the optimal accuracy to which we can recover $u$ and what is a near optimal numerical algorithm to approximate $u$. Problems of this particular type arise in several fields of  science and engineering  (see  e.g. \cite{xu2021explore,brunton2020machine,duraisamy2019turbulence} for examples in fluid dynamics), where a lack of full information on boundary conditions arises for various reasons.   For example, the correct physics might not be fully understood \cite{raj,richards2019appropriate}, or  the boundary values are not accessible \cite{grinberg2008outflow}, or they must be appropriately modified in numerical schemes \cite{formaggia2002numerical,richards2011appropriate}.   Other examples of application domains
  for the results of the present paper can be found in the introduction of \cite{GI}.
  
\subsection{A model for PDEs with incomplete data}
\label{SS:thepde}
\
This paper is concerned with recovering a function $u$ that is known to solve a specific PDE while lacking some of the data   that  would determine $u$ uniquely.   Specifically, we consider the model elliptic problem
\be 
\label{model}
-\Delta u = f \qquad \textrm{in } \Omega, \qquad u=g \qquad \textrm{on }\Gamma:=\partial \Omega,
\ee 
where $\Omega \subset \mathbb R^d$ is a bounded Lipschitz domain with  $d=2$ or $3$. 
The Lax-Milgram theorem \cite{yosida} implies the existence and uniqueness of a solution $u$ 
from the Sobolev space  $H^1(\Omega)$ to \eref{model}, once $f$ and $g$ are prescribed 
in $H^{-1}(\Omega)$ (the dual of $H^1_0(\Omega)$)  and in $H^{1/2}(\Gamma)$ (the image
of $H^1(\Omega)$ by the trace operator), respectively.

  Recall that the trace operator $T$ is defined on a function 
  $w\in C(\bar\Omega)$
  as the restriction of $w$ to $\Gamma$ 
  and this definition is then generalized to functions in Sobolev spaces by a denseness argument. 
In particular, the trace operator is  well defined on $H^1(\Omega)$. For any function $v$ in $H^1(\Omega)$ we denote by $v_\Gamma$ its trace,
\be\label{vB}
v_\Gamma:=T(v)=v|_{\Gamma},\quad v\in H^1(\Omega).
\ee 
The Lax-Milgram analysis also yields the inequalities
\be 
\label{LMbound}
c_0 \| v \|_{H^1(\Omega)} \leq \| \Delta v \|_{H^{-1}(\Omega)} + \| v_\Gamma \|_{H^{1/2}(\Gamma)}
\leq c_1 \| v \|_{H^1(\Omega)} , \qquad  v \in H^1(\Omega).
 \ee
  Here the constants $c_0,c_1$ depend on $\Omega$ and on the particular choice of norms employed on $H^1(\Omega)$ and  $H^{1/2}(\Gamma)$. 
  

Our interest centers on the question of how well we can numerically recover $u$ in the $H^1$ norm when we do not have  sufficient knowledge to guarantee a unique
solution to \eref{model}.  There are many possible settings  to which our techniques   apply, but  we shall focus  on the following scenario: 
\begin{enumerate}
\item  We  have a complete knowledge of $f$ but we do not know $g$. 
\item 
 The function  $g$ belongs to a known compact subset  
$K^B$ of $H^{\frac{1}{2}}(\Gamma)$.  Thus, membership in $K^B$ describes
our  knowledge of the boundary data.  
The function $u$ we wish to recover comes from  the set 
\be\label{classK}
K:=\{u: \ u \ {\rm solves}\ \eref{model}\ {\rm for \ some} \ g\in K^B\},
\ee 
which is easily seen from \eref{LMbound} to be a compact subset of $H^1(\Omega)$.
\item
We have access to finitely many data observations of the unknown solution $u$, in terms
of a vector
\be 
\label{multilambda}
\lambda(u):= (\lambda_1(u),\dots,\lambda_m(u))\in \R^m,
\ee 
where the $\lambda_j$ are fixed and known linear functionals defined on the functions from $K$. 
\end{enumerate}

  
  Natural candidates for the compact set $K^B$ are balls of 
Sobolev spaces 
  that are compactly embedded in $H^{\frac{1}{2}}(\Gamma)$. We thus
restrict our attention for the remainder of this paper to the case 
   \be 
   \label{special}
   K^B:=U(H^{s}(\Gamma)),
   \ee 
  for some $s>\frac 1 2$, 
  where $U(H^s(\Gamma))$ denotes the unit ball of a $H^s(\Gamma)$   with respect to the norm $\|\cdot\|_{H^s(\Gamma)}$. The precise definition of $H^{s}(\Gamma)$
  and its norm $\|\cdot\|_{H^{s}(\Gamma)}$ is described later.   We have assumed that $K^B$ is the unit ball only for convenience.
     The   arguments given below
     hold in the case when  $K^B$ is a ball of  $H^s(\Gamma)$ centered at $0$  of any radius $R$. The numerical algorithms proposed and analyzed do not require the knowledge of the radius $R$  of that ball. 
     
\subsection{The optimal recovery benchmark}

 Let $w_j:=\lambda_j(u)$, $j=1, \ldots,m$, and 
\be  
w:=(w_1,\dots,w_m)=\lambda(u)\in\R^m,
\ee 
be the vector of  data observations.
Therefore, the  totality of information we have about $u$ is that it lies in the compact set
\be 
\label{Kw}
K_w:=\{u\in K:\ \lambda(u)=w\}.
\ee

Our problem is to numerically find a function $\hat u\in H^1(\Omega)$ which approximates simultaneously all the $u\in K_w$.  This is a special case of the  problem of optimal
recovery from data (see \cite{MiR,TW,NW}). 
The optimal recovery, i.e. the best choice for $\hat u$, has the following well known theoretical description. Let $B(K_w)$ be a smallest ball in $H^1(\Omega)$ which contains $K_w$, let $R(K_w):=R(K_w)_{H^1(\Omega)}$ be its radius,
and $\hat u(w)$ its center. These are called the {\it Chebychev ball, radius, and center}, respectively.
Note that the Chebyshev ball $B(K_w)$ is unique in $H^1(\Omega)$, see \cite{devore2016data}. 
Then $R(K_w)$ is the {\em optimal recovery error}, that is, the smallest error we can have for recovering $u$ in the norm of $H^1(\Omega)$,
and $\hat u(w)$ is an {\em optimal recovery} of $u$. 

We are interested in understanding how small $R(K_w)$ is and what are the numerical algorithms 
which are {\it near optimal} in recovering $u$ from the given data $w$.  We say that an  algorithm $w\mapsto \hat u=\hat u(w)$
delivers {\it near optimal recovery} with constant $C$ if  
\be 
\label{nopt}
\|u-\hat u(w)\|_{H^1(\Omega)}\le CR(K_w), \quad w\in\R^m.
\ee 
Of course, we want  $C$ to be a reasonable constant independent of $m$. 
Our results actually deliver a recovery estimate of the form
\be 
\label{nopteps}
\|u-\hat u(w)\|_{H^1(\Omega)}\le R(K_w)+\e, \quad w\in\R^m,
\ee 
where $\e>0$ can made arbitrarily small at the price of higher computational cost.
In this sense, the recovery is near optimal with constant $C>1$ in \iref{nopt} that can be made arbitrarily close to $1$.

\subsection{A connection with the recovery of harmonic functions}

\label{SS:harmonic}
 There is a   natural restatement of our recovery problem in terms of harmonic functions.  Let $f$ be the right side of \eref{model},   where  $f$ is a known  fixed element of $H^{-1}(\Omega)$.
Let $u_0$ be the function in $H^1(\Omega)$ which is the solution to \eref{model} with $g=0$. Then, we can write any function $u\in K$ as
\be 
\label{harmonic}
u=u_0+u_\cH,
\ee 
where $u_\cH$ is a harmonic function in $H^1(\Omega)$ which has boundary value $g=T(u_\cH)$ with $g\in K^B$. Recall our assumption  that $K^B$ is the unit ball of $H^{s}(\Gamma)$ with $s>\frac{1}{2}$.

Let  
$\cH^s(\Omega)$
denote the set of harmonic functions $v$ defined on $\Omega$ for which $v_\Gamma\in H^{s}(\Gamma)$. We refer the reader to \cite{A}, where a detailed study of spaces like $\cH^s(\Omega)$ is presented. We define the norm on $\cH^s(\Omega)$ to be  the one  induced by the norm on $H^{s}(\Gamma)$, namely,
  \be 
  \label{Hnorm}
  \|v\|_{\cH^s(\Omega)}:= \|v_\Gamma\|_{H^{s}(\Gamma)},\quad v\in \cH^s(\Omega).
  \ee
There exist several equivalent definitions of  norms on $H^{s}(\Gamma)$, as discussed later.
For the moment, observe that from \iref{LMbound} it follows the existence of a constant $C_s$ such that
  \be
  \label{Hnorm1}
  \|v\|_{H^1(\Omega)}\leq  C_s\|v\|_{\cH^s(\Omega)}, \quad v\in \cH^s(\Omega).
  \ee
  Indeed, the space $\cH^s(\Omega)$ is a Hilbert space that is compactly embedded in $H^1(\Omega)$,
  as a consequence of the compact embedding of $H^s(\Gamma)$ in $H^{1/2}(\Gamma)$.
    We denote by $K^\cH$  the unit ball of $\cH^s(\Omega)$, 
  \be 
  \label{unitball}
  K^\cH:=U(\cH^s(\Omega)).
  \ee 
Since the function $u_0$ in \eref{harmonic} is fixed, it follows from \eref{special} that
\be 
\label{Hradius}
R(K_w) = R(K_{w'}^\cH)_{ H^1(\Omega)},
\quad w':=\lambda(u_\cH)=w-\lambda(u_0).
\ee

There are two conclusions that can be garnered from this reformulation.  The first is that  the optimal error in recovering  $u\in K_w$ is the same as that in  recovering the harmonic function
$u_\cH\in K^\cH_{w'}$ in the $H^1(\Omega)$ norm. The harmonic recovery  problem does not involve $f$ except in determining $w'$.  The second point is that one possible  numerical algorithm for our original problem is to first construct
a sufficiently accurate approximation $\hat u_0$ to $u_0$ and then to numerically implement an  optimal recovery of a harmonic function in $K^\cH$ from  data observations.
This numerical approach requires the computation of $w'$. In theory, $u_0$ is known to us since we have a complete knowledge of $f$.
However,  $u_0$ must be computed and any approximation $\hat u_0$  will induce an error.   Although this error  can be made arbitrarily small, it means that we only know $w'$ up to a certain numerical accuracy.  One can thus view the harmonic reformulation as an optimal recovery problem with   perturbed observations of $w'$.  The numerical
algorithm presented here  follows this approach. Its central constituent, namely the recovery of harmonic functions from a finite number of noisy observations, can be readily employed 
as well in a number of different application scenarios described e.g. in \cite{GI}.

\subsection{Optimal recovery from centrally symmetric sets in a Hilbert space}
\label{ssec:optrec}

  As noted above, the problem of recovering $u\in K_w$ is directly related to the problem of recovering the harmonic component $u_\cH\in K^\cH$ from the given data observations $w'$.  Note that  $K^\cH$ is the unit ball of the Hilbert space $\cH^s(\Omega)$.
  There is a general approach for optimal recovery from    data observations   in this Hilbert space setting, as discussed e.g. in \cite{MiR,MeM}. 
  We first recollect the general principles of this technique which will be applied in Section~\ref{SS:minmorm}    to our specific setting.
   
    Let $\cH$ be any Hilbert space and suppose that $\lambda_1,\dots,\lambda_m\in \cH^*$ are linearly independent functionals from $\cH^*$.
  Let $X$ be a Banach space such that $\cH$ is continuously embedded in $X$. 
  We are interested in optimal recovery of a function $v$ in the norm $\|\cdot\|_X$, knowing  that $v\in \cK:=U(\cH)$, the unit ball of $\cH$.
  If $w\in \R^m$ is the vector of  observations, we define
  the {\it minimal norm interpolant} as
  \be
  \label{minnorm}
  v^*(w)={\rm argmin}\{ \|v\|_\cH \; : v\in\cH\ {\rm and} \  \; \lambda(v)=w\}.
 \ee 
\begin{remark}
    \label{R:anyX}
  If $\mathcal K_w:=\{u\in \mathcal K:\ \lambda(u)=w\}$ is non-empty, the minimum norm interpolant  $v^*(w)$ is a Chebyshev center of
  $\mathcal K_w$ in $X$. That is, the minimal norm interpolant gives optimal recovery with constant $C=1$.   In other words, this Chebyshev center does not depend on $X$.  The radius $R(K_w)_X$ will however generally depend on $X$.
\end{remark}
  To prove this remark, first note that any $v\in \mathcal K_w$ may be written as 
  $v=v^*(w)+\eta $ where $\eta$ belongs to  the null space $\cN$ of $\lambda$.   Because $v^*(w)$ has minimal norm,  $\eta$ is orthogonal to $v^*(w)$  and hence from the Pythagorean theorem
  $$
  \|v-v^*(w)\|_{\cH}^2=  \|v\|_{\cH}^2-\|v^*(w)\|_{\cH}^2\le  1-\|v^*(w)\|_{\cH}^2=:r^2,
  $$
   because $\|v\|_{\cH}\le 1$.   Notice that $v^*(w)-\eta$ is also in $\mathcal K_w$. It follows that  $\mathcal K_w$ is precisely  the ball in the affine space $v^*(w)+\cN$
  centered at $v^*(w)$ and of radius $r$. In particular, $\mathcal K_w$   is centrally symmetric around $v^*(w)$ and we now show that $v^*(w)$ is a Chebyshev center of a Chebyshev ball in $X$. Recall that unless $X$ is uniformly convex, these quantities might not be uniquely defined \cite{devore2016data}.  Let $u \in \mathcal K_w$ be the furthest away from $v^*(w)$, i.e., 
  $$
u \in \arg\max_{v \in  \mathcal K_w} \| v^*(w) - v \|_X
$$
and  set $\delta :=  \| v^*(w) - u \|_X$.
Since $\mathcal K_w$ is centrally symmetric around $v^*(w)$, $u' := 2v^*(w)-u \in K_w$ and $\| u - u'\|_X = 2\delta$. 
For any $\bar v \in  X$ with $\bar v \not  = v^*(w)$, the triangle inequality yields
$$
2 \delta = \| u - u'\|_X \leq \| u - \bar v\|_X + \| u'-\bar v\|_X \leq 2 \max_{v \in \mathcal K_w} \| \bar v - v \|_X,
$$
and so
$$
\max_{v \in  \mathcal K_w} \| v^*(w) - v \|_X \leq \max_{v \in \mathcal K_w} \| \bar v - v \|_X,
$$
which shows that $v^*(w)$ is indeed a Chebyshev center in $X$.
In particular
$$
 \|v-v^*(w)\|_X \leq R(\cK_w)_X, \quad v\in \mathcal K_w.
  $$

Standard Hilbert space analysis shows that the mapping $w\mapsto v^*(w)$
is a linear operator. More importantly, it has a natural expression
that is useful for numerical computation.  Namely, from the Riesz representation
  theorem each $\lambda_j$ can be described as
  \be 
 \nonumber
  \lambda_j(v)= \langle v,\phi_j\rangle_{\cH}, \quad v\in \cH,
  \ee 
  where   $\phi_j\in\cH$ is called the Riesz representer of $\lambda_j$. The minimal norm interpolant
  has the representation
  \be 
  \label{v*rep}
  v^*=\sum_{j=1}^m a^*_j\phi_j, 
  \ee 
  where
  $a^*=(a^*_1, \ldots,a^*_m)$ solves the system of equations
  \be 
\nonumber
  Ga^*=w,\quad G:=(\langle \phi_i,\phi_j\rangle_{\cH})_{i,j=1,\dots,m},
\ee
with  $G$ being the Gramian matrix associated to $\phi_1,\dots,\phi_m$. 

\begin{remark}
\label{remproj}
Note that $v^*(w)$ is exactly the $\cH$-orthogonal projection of $u$
onto the space spanned by the $m$ Riesz representers.
\end{remark}

\begin{remark}
In the case where $\cH$ is a more general Banach space, we are still ensured
 that the minimal norm interpolation is a near-optimal recovery with constant $C=2$.
 However, its dependence on the data $w$ is no longer linear and the above
observation regarding its computation
does not apply.
 \end{remark}

Our proposed numerical recovery scheme is based on approximately realizing
\eref{v*rep}.

\subsection{Objectives and outline}
\
The    main goal of this paper 
is to create numerical algorithms which are guaranteed to produce  a near optimal recovery   $\hat u$   from the given data $w$ and to analyze their practical implementation. 
 We begin in \S \ref{S:hs} with some remarks on the definition of the space $H^s(\Gamma)$ and  its norm, which are of importance  both in the accuracy analysis and the practical implementation of
recovery algorithms.

Next we turn to the description of our numerical algorithms.  The algorithms we propose and analyze are based on the  general approach for optimal recovery  described  in Section~\ref{ssec:optrec} when this approach is applied to our particular PDE setting.
We describe a solution algorithm which takes into consideration the effect of numerical
perturbations.  We  first consider the case when the linear functionals $\lambda_j$ are defined on all of $H^1(\Omega)$ 
and then adapt this algorithm to the case when the linear functionals are point evaluations 
\be 
\label{pe}
\lambda_j(u):=u(x_j),\quad 
x_j\in \overline\Omega,\quad j=1, \ldots,m.
\ee 
Point evaluations are not defined on all of $H^1(\Omega)$ when $d>1$,  however, they are defined on $K$ when the smoothness order $s$ is large enough.

The critical ingredient in our proposed algorithm is the numerical computation of the
Riesz representers $\phi_j$ of the restrictions of  $\lambda_j$ 
to the Hilbert space $\cH^s(\Omega)$. 
Each of these Riesz representers  is characterized as a solution
  to an elliptic problem and can be computed
 offline since it does not involve the 
measurement vector $w$.  Our suggested numerical method for approximating $\phi_j$ is  based on finite element discretizations and is discussed in \S \ref{S:numerical}.   We establish
quantitative error bounds  for the numerical approximation in terms of the mesh size.  
Numerical illustrations of the  optimal recovery algorithm are given in \S\ref{SS:Rr}.

Note that the optimal recovery error over the class $K$
 strongly depends on the choice of the linear functionals $\lambda_j$.
For example, in the case of point evaluation, this error can be very large if the data sites $\{x_j\}_{j=1}^m$ are poorly positioned, or small if they are optimally positioned. This points to the importance  of the Gelfand widths
and sampling numbers.  They describe the optimal recovery error over $K$ with optimal choice of  functionals 
in the general case and the point evaluation case, respectively. The numerical behaviour of these quantities 
in our specific setting is 
discussed in \S \ref{S:Gelfandwidths}.

\section{The spaces $H^s(\Gamma)$ and $\cH^s(\Omega)$}
\label{S:hs}

In this section, we discuss the definition and basic properties of the spaces 
 $H^{s}(\Gamma)$ and $\cH^s(\Omega)$.  We refer to \cite{Ad} for a general treatment of Sobolev spaces
on domains $D\subset \R^d$. Recall that for fractional orders $r>0$, the norm of $H^r(D)$ is defined as
$$
\|v\|_{H^r(D)}^2:=\|v\|_{H^k(D)}^2+\sum_{|\alpha|=k}
\,\int_{D\times D}\frac {|\partial^\alpha v(x)-\partial^\alpha v(y)|^2}{|x-y|^{d+2(r-k)}}dxdy,
$$
where $k$ is the integer such that $k<r<k+1$,
and $\|v\|_{H^k(D)}^2:=\sum_{|\alpha|\leq k}\|\partial^\alpha v\|_{L^2(D)}^2$ is the standard $H^k$-norm.

\subsection {Equivalent definitions of $H^s(\Gamma)$}
\label{SS:HsGamma}

Let $\Omega$ be any bounded Lipschitz domain in $\R^d$.  We recall the trace operator $T$
introduced in \S\ref{SS:thepde}.  One first possible definition of
the space $H^{s}(\Gamma)$, for any 
 $s\geq\frac{1}{2}$, is as the restriction of $H^{s+\frac 1 2}(\Omega)$ to $\Gamma$, that is,
 $$
 H^{s}(\Gamma)=T(H^{s+\frac 1 2}(\Omega)),
 $$
 with norm
\be
\|g\|_{H^{s}(\Gamma)}:=\min \Big\{\|v\|_{H^{s+\frac 1 2}(\Omega)} \; : \; v_\Gamma=g\Big\}.
\label{sobnorm1}
\ee
The resulting norm is referred to as the trace norm definition for $H^s(\Gamma)$.

There is a second, more intrinsic way to define  $H^{s}(\Gamma)$, by properly adapting the notion of Sobolev smoothness to the boundary. This can be done by locally mapping 
the boundary onto domains of $\R^{d-1}$ and requiring that the pullback of $g$ by
such transformation have $H^s$ smoothness on such domains. We refer the reader to \cite{Gr} and \cite{Nec} for the
complete intrinsic definition, where it is proved to be equivalent to the trace definition
for a range of $s$ that depends on the smoothness of the boundary $\Gamma$.

For small values of $s$, Sobolev norms 
for $H^s(\Gamma)$ may also be equivalently defined without the help of
local parameterizations, as contour integrals. For example, if $0<s<1$ and $\Omega$ is a Lipschitz domain,  
we define
$$
\|g\|_{H^s(\Gamma)}^2:=\|g\|_{L^2(\Gamma)}^2+\int_{\Gamma\times \Gamma}\frac {|g(x)- g(y)|^2}{|x-y|^{d-1+2s}}dxdy,
$$
and if $s=1$ and $\Omega$ is a polygonal domain, we define
\be 
\label{eqH1}
\|g\|_{H^1(\Gamma)}^2:=\|g\|_{L^2(\Gamma)}^2+
\|\nabla_{\Gamma} g\|_{L^2(\Gamma)}^2,
\ee  
where $\nabla_{\Gamma}$ is the tangential gradient, and likewise 
$$
\|g\|_{H^s(\Gamma)}^2:=\|g\|_{H^1(\Gamma)}^2+\int_{\Gamma\times \Gamma}\frac {|\nabla_{\Gamma} g(x)- \nabla_{\Gamma} g(y)|^2}{|x-y|^{d-1+2(s-1)}}dxdy,
$$
for $1<s<2$. In the numerical illustration given in \S\ref{SS:Rr}, we will specifically take the value $s=1$
and a square domain, using the definition \eref {eqH1}.

When $\Omega$ has smooth boundary, it is known that the trace definition and intrinsic definition of 
the $H^s(\Gamma)$ norms are equivalent for all $s\geq 1/2$. 
On the other hand, when $\Omega$ does not have a smooth boundary, it is easily seen that 
the two definition
are not equivalent unless restrictions are made on $s$. Consider for example
the case of polygonal domains of $\R^2$: it is easily seen 
that the trace $v_\Gamma$ of a smooth function $v\in C^\infty (\o \Omega)$ 
has a tangential gradient $\nabla_{\Gamma}v_\Gamma$ that generally has jump discontinuities
at the corner points and thus does not belong to $H^{1/2}(\Gamma)$. In turn, the 
equivalence between the trace and intrinsic norms only holds for $s<\frac 3 2$ and 
in such case we limit the value of $s$ to this range. The same restriction $s<3/2$ 
applies to a polyhedral domain in the case $d=3$.

\subsection {The regularity of functions in $\cH^s(\Omega)$}
\label{SS:regHs}

We next give some remarks on the Sobolev smoothness of functions
from the space $\cH^s(\Omega)$ when $s>1/2$. Clearly such harmonic functions are infinitely smooth
inside $\Omega$ and also belong to $H^1(\Omega)$, but one would like to know for which value of $r$
they belong to $H^r(\Omega)$. To answer this question, we consider $v\in \cH^s(\Omega)$.  By the  definition of $\cH^s(\Omega)$, $v$ is harmonic in $\Omega$ and 
 $v_\Gamma\in H^s(\Gamma)$. Having assumed that $s$ in the admissible
range where all above definitions of the $H^s(\Gamma)$ norms are equivalent, and using 
the first one, we know that there exists a function
$\t v\in H^{s+\frac 1 2}(\Omega)$ such that $\t v_\Gamma=v_\Gamma$
$$
\|\t v\|_{H^{s+\frac 1 2}(\Omega)}= \|v_\Gamma\|_{H^s(\Gamma)}=\|v\|_{\cH^s(\Omega)}.
$$
We define $\o v:=v-\t v$ so that $v=\t v +\o v$. We are interested in the regularity of $\o v$ since it will give the regularity of $v$.   Notice that   $\o v_\Gamma=0$ and 
$$
-\Delta \o v=\o f:=\Delta \t v.
$$
 The function $\o f$ belongs to the Sobolev space $H^{s-\frac 3 2}(\Omega)$ and we are left with
the classical question of the regularizing effect in Sobolev scales when solving the Laplace equation
with Dirichlet boundary conditions. Obviously, when $\Omega$ is smooth, we find that $\o v\in H^{s+\frac 1 2}(\Omega)$
and so we have obtained the continuous embedding
$$
\cH^s(\Omega)\subset H^r(\Omega), \quad r=s+\frac 1 2.
$$
For less smooth domains, the smoothing effect is limited (in particular by the presence of singularities on the boundary of $\Omega$),  i.e., $\o v $ is only guaranteed to be in $H^r(\Omega)$ where
$r$ may be less than $s+1/2$,
see \cite{Gr}.  More precisely
$$
\cH^s(\Omega)\subset H^r(\Omega),
$$
where
\be  
\label{definer} \quad r:=\min\Big\{s+\frac 1 2,r^*\Big\},
\ee  
Here, $r^*=r^*(\Omega)$ is the limiting bound for the smoothing effect:
\begin{enumerate}
\item
For smooth domains $r^*=\infty$.
\item
For convex domains $r^*=2$.
\item
For non-convex polygonal domains in $\R^2$, or a polyhedron in $\R^3$, one has $3/2<r^*<2$ where
the value of $r^*$ depends on the reentrant angles. 
\item
In particular for polygons, we can take $r^*=1+\frac \pi\omega-\e$, for any $\e>0$ where $\omega$ is the largest inner angle.
\end{enumerate}
Note that $r^*$ could be strictly smaller than 
$s+\frac 1 2$. 

In summary, 
for an admissible range of $r>1$ that depends on $s$ and $\Omega$
one has the continuous embedding $\cH^s(\Omega)\subset H^r(\Omega)$, and so there exists
a constant $C_1$ that depends on $(r,s)$ and $\Omega$, such that
\be
\|v\|_{H^r(\Omega)}\leq C_1\|v\|_{\cH^s(\Omega)} = C_1\|v_\Gamma\|_{H^s(\Gamma)}, \quad v\in \cH^s(\Omega).
\label{hshr}
\ee

\section{A near optimal recovery algorithm}
\label{S: firstnumericalalgorithm}

In this section, we present  a  numerical algorithm for solving \eref{model} when the information about the boundary value $g$ is  
incomplete. 
We first work under the assumption that the $\lambda_j$'s are  continuous over $H^1(\Omega)$,
and assumed to be  linearly independent (linear independence can be guaranteed by throwing away dependent functionals when necessary). We prove that the proposed  numerical  recovery algorithm is near optimal. We then adapt our approach to the case where the
$\lambda_j$'s are point  evaluations, see \eref{pe}, and therefore not continuous over $H^1(\Omega)$ when $d\geq 2$.

  \subsection{Minimum norm data fitting}
  \label{SS:minmorm} 
\
 
 We now apply the general optimal recovery principles discussed in Section~\ref{ssec:optrec}  to our specific setting in which 
$$
\cH = \cH^s(\Omega) \qquad \textrm{and} \qquad  X=H^1(\Omega).
$$

Let $\phi_j\in \cH^s(\Omega)$ be the Riesz representer of the functional $\lambda_j$ when viewed as a functional on $\cH^s(\Omega)$.  In other words
  \be 
\nonumber
  \lambda_j(v)=\langle v,\phi_j\rangle_{\cH^s(\Omega)},\quad v\in \cH^s(\Omega).
  \ee 
 We assume that  the $\lambda_j$ are linearly independent   on $\cH^s(\Omega)$ and thus  the Gramian matrix 
  \be 
 \nonumber
  G=\(g_{i,j}\)_{i,j=1,\dots,m},\quad g_{i,j}:=\langle \phi_i,\phi_j\rangle_{\cH^s}= \lambda_j(\phi_i),
  \ee 
  is invertible. 
  
  Now, let $u=u_0+u_\cH$, with $u_\cH\in K^\cH=U(\cH^s(\Omega))$ be the function in $K$ that gave rise to our data observation $w$.  So, we have  
  \be
  \nonumber
  w'=w-\lambda(u_0)=\lambda(u_\cH).
  \ee  
  If  $a^*$ is the vector in $\R^m$ which satisfies $Ga^*=w'$,  then $u_\cH^*:=\sum_{j=1}^ma_j^*\phi_j$ is the function 
  of minimum $\cH^s(\Omega)$ norm which satisfies the data $w'$, i.e., $\lambda(u_\cH^*)=w'$.   We have seen that
  \be
  \nonumber
  \|u_\cH-u_\cH^*\|_{H^1(\Omega)}\leq R(K^\cH_{w'})_{H^1(\Omega)},
  \ee
  namely, $u_\cH^*$ is the  optimal recovery  of the functions in $K^\cH_{w'}$.  Note that the recovery error is measured in $H^1$ not in 
  $\cH^s(\Omega)$.  In turn, see \eref{Hradius}, the function  $u^*:=u_\cH^*+u_0$ is  the  optimal recovery for functions in $K_w$:
  \be
  \nonumber
 \|u-u^*\|_{H^1(\Omega)} \leq R(K_{w})_{H^1(\Omega)}.
  \ee

   The idea behind our proposed  numerical method is to  numerically construct a function  $\hat u\in H^1$ that approximates $u^*$ well.  If, for example, we have for $\e>0$ the bound
  \be  
 \nonumber
  \|u^*-\hat u\|_{H^1(\Omega)}\le \e ,
  \ee 
  then for any $u\in K$, we have by the triangle inequality
   \be 
 \nonumber
   \|u-\hat u\|_{H^1(\Omega)} \le R(K_w)_{H^1(\Omega)}+ \e. 
   \ee 
   Given any  $C>1$, by taking $\e$ small enough, we have  that $\hat u$ is a near best recovery 
   of the functions in $K_w$ with constant $C$.
   
  \subsection{The numerical recovery algorithm for $H^1$-continuous functionals}\label{ss:numerical_algorithm}
  
  Motivated by the above analysis, we propose the following numerical algorithm for solving our recovery problem. 
  The algorithm involves approximations of the function $u_0$ and the Riesz representers $\phi_j$, typically computed by finite element discretizations,
  and the application of the linear functionals $\lambda_j$ to these approximations. In order to avoid extra technicalities, 
  we make here the assumption that the applications of the functionals to a known finite element function can be exactly computed. 

 We first work under the additional assumption that the linear functionals $\lambda_j$
are not only defined on $K$ but that they are  continuous over $H^1(\Omega)$.  We define $\Lambda$ as the maximum of the norms of the $\lambda_j$ on $H^1(\Omega)$. In this case 
  \be 
  \label{fhs} 
  |\lambda_j(v)|\le \Lambda \|v\|_{H^1(\Omega)},\quad v\in
  H^1(\Omega).
  \ee 
In what follows, throughout this paper, we use the following weighted $\ell_2$ norm  on $\R^m$,
\be 
\nonumber
\|z\|:=\left(\frac{1}{m}\sum_{j=1}^m|z_j|^2\right)^{1/2}=m^{-1/2}\|z\|_{\ell^2},\quad z=(z_1,\ldots,z_m)\in\R^m.
\ee
In particular, we have
\be 
\label{normone}
\nonumber
\|\lambda(v)\|\le 
 \Lambda\|v\|_{H^1(\Omega)}, \quad v\in H^1(\Omega).
\ee

  Given a user prescribed accuracy $\e>0$, our algorithm does the following four steps involving intermediate tolerances {$(\e_1,\e_2)$.
  \vskip .1in
  \noindent{\underline{\bf Step 1:} {\it  We numerically find an approximation $\hat u_0$ to $u_0$ which satisfies
  \be 
  \label{S1}
  \|u_0-\hat u_0\|_{H^1(\Omega)}\le \e_1. 
  \ee 
  }
  
  \vskip .1in
  \noindent  
  To find such a $\hat u_0$, we   use standard or adaptive FEM methods.
   Given that $\hat u_0$ has been constructed, we define 
   $\hat w:=w-\lambda(\hat u_0)$. 
   Then, for $w':=w-\lambda(u_0)$ we have, see \eref{normone}, 
  \be 
  \label{s111}
  \|w'-\hat  w\|\le  \Lambda\e_1.
  \ee
  On the other hand, since
  $|\lambda_j(v)|\leq  \Lambda\|v\|_{H^1(\Omega)}\leq  \Lambda_s\|v\|_{\cH^s(\Omega)}\le  \Lambda_s$, 
  where
  $$
  \Lambda_s:=C_s\Lambda,
  $$
  see 
   \eref{fhs}, \eref{Hnorm1} and \eref{unitball}, we derive that
 \be
 \label{boundnormw}
 \|w'\|\le \Lambda_s.
 \ee 
Thus by triangle inequality, we also find that
  \be 
  \label{boundwhat}
  \|\hat  w\|\le \Lambda_s+\Lambda \e_1.
  \ee 
  \vskip .1in
  \noindent
  \underline{\bf Step 2:}  {\it  For each $j=1,\dots,m$, we numerically compute an approximation $\hat \phi_j\in H^1(\Omega)$  to $\phi_j$ which satisfies
  \be 
  \label{hatphij}
  \|\phi_j-\hat \phi_j\|_{H^1(\Omega)} \le \e_2,\quad j=1,\dots,m.
  \ee }
  \vskip .1in
  \noindent
This numerical computation is  crucial  and is performed during the  offline phase of the algorithm. We detail it
in \S \ref{S:numerical}. Note that the $\hat\phi_j$'s are not assumed to be in $\cH^s(\Omega)$, and in particular not assumed to be harmonic functions.
    
  \vskip .1in
  \noindent
  \underline{\bf Step 3:} {\it We define and compute the   matrix 
  \be
  \nonumber
  \hat G=(\hat g_{i,j})_{i,j=1,\dots,m},
  \quad \hat g_{i,j}:= \lambda_j(\hat \phi_i),
  \ee
 and thus  $|\hat g_{i,j}- g_{i,j}|\leq \Lambda\e_2$ for all $i,j$.}
  \vskip .1in
  \noindent

 It follows that for  the matrix $R:=G-\hat G$
 we have
  \be
  \nonumber
  \|R\|_{1}\le  m\Lambda\e_2,
  \ee
where we use the shorthand notation
 $\|\cdot\|_1:=\|\cdot\|_{\ell_1\to \ell_1}$ for matrices.
  Since $G$ is invertible, we are ensured that $\hat G$ is also  invertible   for $\e_2$ small enough.
  We define 
  \be
  \nonumber
  M:=\|G^{-1}\|_{1}, \quad \hat M:=\|\hat G^{-1}\|_{1}.
  \ee
  While these two norms are finite, their size will depend on the nature and the positioning of the linear functionals $\lambda_j$, $j=1,\dots,m$, as it will be seen in the section on numerical experiments.
These two numbers are close to one another when $\e_2$ is small since   $\hat M$  converges towards the unknown quantity $M$ as $\e_2\to 0$.  
  In particular, we have 
  $$
  |M-\hat M|=|\|G^{-1}\|_1-\|\hat G^{-1}\|_1|\leq \|G^{-1}-\hat G^{-1}\|_1=\|\hat G^{-1}RG^{-1}\|_1\leq M\hat Mm\Lambda\e_2,
  $$
  from which we obtain that
  \be
  \label{pp1}
M\leq \frac{\hat M}{1-m\hat M \Lambda\e_2} \quad {\rm and}
\quad \hat M\leq \frac{M}{1-mM \Lambda\e_2},
  \ee
 provided that $mM\Lambda\e_2<1$ and $m\hat M\Lambda\e_2<1$.  We also have the bound
 \be 
 \label{Gerror}
 \|\hat G^{-1}-G^{-1}\|_{1}\leq \frac {M^2}{1-mM\Lambda\e_2} m\Lambda\e_2=:\delta. 
 \ee 
 It is important to observe that $\delta$ can be made arbitrarily small by diminishing $\e_2$.

  \vskip .1in
  \noindent
  \underline{\bf Step 4:} {\it  We numerically solve the $m\times m$
  algebraic system $\hat G \hat a= \hat w$, thereby finding a vector $\hat a=(\hat a_1,\dots,\hat a_m)$.
  We then define $\hat u_\cH:=\sum_{j=1}^m\hat a_j\hat \phi_j$ and
  our recovery of $u$ is  $\hat u:=\hat u_0+\hat u_\cH$.}
  
  \vskip .1in
  \noindent 
  This step can be implemented by standard linear algebra solvers.  
\vskip .1in

   One major advantage of the above algorithm
   is that Steps 1-2-3 can be performed offline since they do not involve the data $w$.  That is, we can
  compute $\hat u_0$, the approximate Riesz representers $\hat \phi_j$ and the approximate Gramian $\hat G$ and its inverse without knowing $w$.  
  In this way, the computation of $\hat u$ from given data $w$ can be done fast online
  by Step 4 which only involves solving an $m\times m$ linear system.
  This may be a significant advantage, for example, when having to process a large number of measurements for the same set of sensors.

  \subsection{A near optimal recovery bound}

   The following theorem shows that a near optimal recovery of $u$ can be reached provided that the tolerances in the above described algorithm are chosen small enough.
  
  \begin{theorem} 
  \label{T:H2}
  For any prescribed $\e>0$, if the tolerances $(\e_1,\e_2)$,
 are small enough such that
   $mM\Lambda\e_2<1$ and
  \be 
  \label{eps}
\e_1+mM \Lambda_s\e_2+( C_0+\e_2)(mM\Lambda\e_1 +m(\Lambda_s+\Lambda\e_1 )\delta)
  \leq \e, 
  \ee 
where $C_0:=\max_{j=1,\ldots,m}\|\phi_j\|_{H^1(\Omega)}$ and 
$\delta:= \frac {M^2}{1-mM\Lambda\e_2} m\Lambda\e_2$, then
the function $\hat u$ generated by the above algorithm
  satisfies 
  \be  
\nonumber
  \|u-\hat u\|_{H^1(\Omega)} \le    R(K_w)_{H^1(\Omega)}+\e,\quad \hbox{for every}\quad u\in K_w.
  \ee 
  Thus, for any $C>1$ it is a  near optimal recovery of
  $u$ with constant $C$ provided $\e$ is taken sufficiently small. 
  \end{theorem}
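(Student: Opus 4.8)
The plan is to track, step by step, how each numerical approximation in the algorithm contaminates the optimal recovery $u^* = u_0 + u_\cH^*$, and then invoke the triangle inequality $\|u - \hat u\|_{H^1} \le \|u - u^*\|_{H^1} + \|u^* - \hat u\|_{H^1} \le R(K_w)_{H^1(\Omega)} + \|u^* - \hat u\|_{H^1}$, so that the entire task reduces to bounding $\|u^* - \hat u\|_{H^1}$ by $\e$. Writing $\hat u - u^* = (\hat u_0 - u_0) + (\hat u_\cH - u_\cH^*)$, the first term contributes at most $\e_1$ by \eref{S1}. For the harmonic part, I would compare $\hat u_\cH = \sum_j \hat a_j \hat\phi_j$ with $u_\cH^* = \sum_j a_j^* \phi_j$ by splitting into two pieces: replacing $\phi_j$ by $\hat\phi_j$ (controlled by \eref{hatphij}), and replacing the exact coefficient vector $a^*$ (solving $Ga^* = w'$) by $\hat a$ (solving $\hat G\hat a = \hat w$). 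Concretely, $\|u_\cH^* - \hat u_\cH\|_{H^1} \le \|\sum_j a_j^*(\phi_j - \hat\phi_j)\|_{H^1} + \|\sum_j(a_j^* - \hat a_j)\hat\phi_j\|_{H^1}$, where the first summand is at most $\|a^*\|_{\ell_1}\e_2$ and the second is at most $\|a^* - \hat a\|_{\ell_1}\max_j\|\hat\phi_j\|_{H^1} \le \|a^* - \hat a\|_{\ell_1}(C_0 + \e_2)$ using \eref{hatphij}.

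The main work is estimating $\|a^* - \hat a\|_{\ell_1}$. Here I would use $a^* = G^{-1}w'$, $\hat a = \hat G^{-1}\hat w$, and the algebraic identity
\[
a^* - \hat a = G^{-1}w' - \hat G^{-1}\hat w = (G^{-1} - \hat G^{-1})w' + \hat G^{-1}(w' - \hat w).
\]
The first term is bounded by $\delta\|w'\|_{\ell_1}$ via \eref{Gerror}, and the second by $\hat M\|w' - \hat w\|_{\ell_1} \le \hat M \Lambda\e_1\,m^{1/2}$ using \eref{s111} (after converting between the weighted norm $\|\cdot\|$ and $\|\cdot\|_{\ell_1}$ with the Cauchy–Schwarz factor $m^{1/2}$). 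One also needs $\|w'\|_{\ell_1} \le m^{1/2}\|w'\|_{\ell_2} \le m\|w'\| \le m\Lambda_s$ from \eref{boundnormw}. Combining and using \eref{pp1} to replace $\hat M$ by a bound in terms of $M$, one obtains $\|a^* - \hat a\|_{\ell_1} \lesssim m M\Lambda_s\delta/(\text{stuff}) + mM\Lambda\e_1$; similarly $\|a^*\|_{\ell_1} \le M\|w'\|_{\ell_1} \le mM\Lambda_s$. Substituting these into the display above and collecting terms reproduces exactly the left-hand side of \eref{eps}: the $\e_1$ from Step 1, the $mM\Lambda_s\e_2$ from the $\|a^*\|_{\ell_1}\e_2$ term, and the $(C_0 + \e_2)(mM\Lambda\e_1 + m(\Lambda_s + \Lambda\e_1)\delta)$ from the coefficient error times $\max_j\|\hat\phi_j\|_{H^1}$.

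The one genuine subtlety — the place where care is needed rather than routine bookkeeping — is the bookkeeping of the norm conversions between $\|\cdot\|$, $\|\cdot\|_{\ell_1}$, and $\|\cdot\|_{\ell_2}$ on $\R^m$, since $M$ and $\hat M$ are defined as $\ell_1 \to \ell_1$ operator norms while the data bounds \eref{s111}, \eref{boundnormw} are stated in the weighted norm $\|\cdot\|$; one must be consistent so that the factors of $m$ land exactly as in \eref{eps}. Everything else is the triangle inequality, the Neumann-series perturbation bounds already recorded in \eref{pp1} and \eref{Gerror}, and the observation that each of $\e_1$, $\e_2$ (hence $\delta$) can be driven to zero, so that the left side of \eref{eps} is continuous in $(\e_1, \e_2)$ and vanishes as $(\e_1,\e_2) \to 0$; picking tolerances small enough to make it $\le \e$ is then possible, and the final claim about near optimality with constant $C > 1$ follows since $R(K_w)_{H^1(\Omega)} + \e \le C\,R(K_w)_{H^1(\Omega)}$ whenever $\e \le (C-1)R(K_w)_{H^1(\Omega)}$, with the degenerate case $R(K_w) = 0$ handled separately (then $K_w$ is a single point and $\hat u \to u$).
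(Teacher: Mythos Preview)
Your approach is essentially the same as the paper's: reduce to bounding $\|u^*-\hat u\|_{H^1}$, split the harmonic part as $\|a^*\|_{\ell_1}\e_2+(C_0+\e_2)\|a^*-\hat a\|_{\ell_1}$, and control the coefficient error via a perturbation argument. The one structural difference is in the splitting of $a^*-\hat a$. You write
\[
a^*-\hat a=(G^{-1}-\hat G^{-1})w'+\hat G^{-1}(w'-\hat w),
\]
which forces you to bound the second term with $\hat M$ and then convert $\hat M$ back to $M$ via \eref{pp1}; as you note, this works (indeed $\hat M\le M+\delta$, so $m\hat M\Lambda\e_1\le mM\Lambda\e_1+m\Lambda\e_1\delta$, and combined with $m\Lambda_s\delta$ from the first term you recover exactly $mM\Lambda\e_1+m(\Lambda_s+\Lambda\e_1)\delta$). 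The paper instead introduces the intermediate vector $\tilde a:=G^{-1}\hat w$ and splits as
\[
a^*-\hat a=G^{-1}(w'-\hat w)+(G^{-1}-\hat G^{-1})\hat w,
\]
which keeps $M$ (not $\hat M$) throughout and uses $\|\hat w\|\le \Lambda_s+\Lambda\e_1$ from \eref{boundwhat}; this lands directly on the constants in \eref{eps} without the extra $\hat M\to M$ conversion. Both routes are equivalent.

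One small slip to fix: in your bound on $\hat G^{-1}(w'-\hat w)$ you write $\hat M\Lambda\e_1\,m^{1/2}$, but the correct conversion is $\|z\|_{\ell_1}\le m^{1/2}\|z\|_{\ell_2}=m\|z\|$, so the factor is $m$, not $m^{1/2}$ (you use the right factor $m$ two lines later for $\|w'\|_{\ell_1}$).
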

  \begin{proof} Let  $u=u_0+v$  be our target function in $K_w$.  We define $w'=w-\lambda(u_0)$ and  $v^*:=v^*(w')$ which is the Chebyshev center of $K^\cH_{w'}$. We recall the algebraic system $Ga^*= w'$ associated to the characterization of $v^*$ (see \eref{v*rep}). We write 
  \be 
  \label{eq1}
  \|u_\cH^*-\hat u_\cH\|_{H^1(\Omega)}\leq \Big\|\sum_{j=1}^m a_j^*(\phi_j- \hat \phi_j)\Big\|_{H^1(\Omega)}+\Big\|\sum_{j=1}^m (a_j^*-\hat a_j) \hat \phi_j\Big\|
  _{H^1(\Omega)}
  \leq \|a^*\|_{\ell_1}\e_2+\|a^*-\hat a\|_{\ell_1}( C_0+\e_2),
  \ee
 where we have used \eref{hatphij} and the fact that 
  $$
  \|\hat \phi_j\|_{H^1(\Omega)}\leq
  \|\phi_j\|_{H^1(\Omega)}+\|\phi_j-\hat\phi_j\|_{H^1(\Omega)}\leq C_0+\e_2. 
  $$
Note that  
  \be
  \label{eq2}
  \|a^*\|_{\ell_1}=\|G^{-1}w'\|_{\ell_1}
  \le   M \| w'\|_{\ell_1} \leq  Mm
   \|w'\|\leq mM \Lambda_s,
  \ee
  where we have used that $\|w'\|_{\ell_1} \leq  m
  \|w'\|$
  and inequality \eref{boundnormw}. 
  Therefore it follows from \eref{eq1} and \eref{eq2} that 
  \be 
  \label{vest}
  \|u_\cH^*-\hat u_\cH\|_{H^1}\leq mM \Lambda_s\e_2+\|a^*-\hat a\|_{\ell_1}( C_0+\e_2).
  \ee
  For the estimation of $\|a^*-\hat a\|_{\ell_1}$, we introduce the intermediate vector $\tilde a\in \R^m$, which is the  solution to the system $G\tilde a=\hat w$. Clearly, 
$$  
  \|\tilde a - a^*\|_{\ell_1}
  =  \|G^{-1}(\hat w-w')\|_{\ell_1}
  \leq M \|\hat w-w'\|_{\ell_1}
  \leq M m\|\hat w-w'\|\leq mM \Lambda\e_1,
$$ 
  where we invoked \eref{s111}.  On the other  hand,  in view of \eref{Gerror}  and \eref{boundwhat}, we have
$$
  \|\tilde a -  \hat a\|_{\ell_1}
  =\|(G^{-1}-\hat G^{-1})\hat w\|_{\ell_1}
   \leq \delta \|\hat w\|_{\ell_1}
\leq m\delta \|\hat w\|
  \leq m(\Lambda_s+\Lambda\e_1 )\delta.
$$ 
  Combining these  two estimates, 
   we find that
$$ 
  \|a^*-\hat a\|_{\ell_1}
 \leq mM\Lambda\e_1 +m(\Lambda_s+\Lambda\e_1 )\delta.
$$  
 We now insert this bound into \eref{vest} to obtain 
  \be 
  \nonumber
  \|u_\cH^*-\hat u_\cH\|_{H^1(\Omega)}\leq mM\Lambda_s\e_2+(C_0+\e_2)(mM\Lambda\e_1 +m(\Lambda_s+\Lambda\e_1 )\delta).
  \ee
   Thus, for $u^*:=u_0+u_\cH^*$ and using \eref{S1}, we have
  \begin{eqnarray} 
  \nonumber
  \|u^*-\hat u\|_{H^1(\Omega)}
  &\leq &\|u_0-\hat u_0\|_{H^1(\Omega)}
  + \|u_\cH^*-\hat u_\cH\|_{H^1(\Omega)}\\
  &\leq & 
 \e_1+mM \Lambda_s\e_2+( C_0+\e_2)(mM\Lambda\e_1 +m(\Lambda_s+ \Lambda\e_1)\delta)
  \leq \e,
  \label{finalbound}
  \end{eqnarray} 
     Since $u=u_0+u_\cH$, we have

$$    
     \|u-  u^*\|_{H^1(\Omega)}=\|u_\cH-u_\cH^*\|_{H^1(\Omega)}\le R(K^\cH_{w'})_{H^1(\Omega)} = R(K_w)_{H^1(\Omega)},
$$     
    and the statement of the theorem  follows from this inequality and \eref{finalbound}.
  \end{proof}

\begin{remark}
We point out that $M$, the  norm
of the matrix $G^{-1}$, grows potentially fast as $m$ gets larger indicating that the
Riesz representers become closer to be linearly dependent. This leads to numerical
difficulties when computing the estimator which, as noted in Remark \ref{remproj},
amounts to the $\cH$-orthogonal projection onto the space spanned by
the representers. Therefore, regularization strategies might also be needed to compete with the ill-conditioning
of $G$ in the asymptotic regime.
One typical strategy is to set to $0$ the smallest eigenvalues of $G$
according to a given threshold and apply the pseudo-inverse. We refer for example
to \cite{AH} where this regularized projection strategy is reviewed and analyzed in detail.
\end{remark}

 \begin{remark}
 \label{Remark1}
Note that in numerical computations the quantity $\hat M$ is available while $M$
is unknown. Thus in practice, in order to achieve the prescribed accuracy $\e$, we can first impose that 
$\e_2<(2m\hat M\Lambda)^{-1}$ and derive the inequalities, see \eref{pp1},
$$
M\leq \frac{\hat M}{1-m\hat M\Lambda\e_2}\leq 
2\hat M,\quad \|G^{-1}-\hat G^{-1}\|_1\leq 
\frac{\hat M^2}{1-m\hat M\Lambda \e_2}m\Lambda\e_2\leq 2\hat M^2m\Lambda\e_2=:\hat \delta,
$$
where the last inequality is proven in a similar fashion to \eref{Gerror}. If we then  follow the proof of Theorem 
\ref{T:H2}, the requirement in \eref{finalbound} can be substituted by
\begin{eqnarray}
\nonumber
\e_1+2m\hat M \Lambda_s\e_2+( C_0+\e_2)(2m\hat M\Lambda\e_1 +
m(\Lambda_s+\Lambda\e_1 )\hat\delta)
  \leq \e,
  \label{eq5}
\end{eqnarray}
 and thus  all participating quantities are computable.
 \end{remark}

\begin{remark}
 \label{Remark2}
 The  result in Theorem \ref{T:H2} can easily be extended to the case of noisy data, that is, to the case when 
the observations 
\be
\nonumber
\t w=w+\eta,
\ee
where $\eta$ is a noise vector of norm $\|\eta\|\leq \kappa$. Indeed, the application of the algorithm
to this noisy data  leads to finding  in Step 1 the vector $\hat w:=w+\eta-\lambda(\hat u_0)$ that satisfies
\be
\nonumber
\|w'-\hat w\|\leq  \Lambda\e_1+\kappa, \quad  \hbox{and}\quad \|\hat w\|\leq \Lambda_s+\e_1 \Lambda+\kappa,
\ee
 where $w'=w-\lambda(u_0)$.
Inspection of the above proof shows that under the same assumption as in Theorem \ref{T:H2}, one 
has the recovery bound
{\rnew }
\be  
 \nonumber
  \|u-\hat u\|_{H^1(\Omega)} \le   R(K_w)_{H^1(\Omega)}+\e +C\kappa ,\quad \hbox{for every}\quad u\in K_w,
\ee 
 where $C:=(M+\delta)m(C_0+\e_2)$.
\end{remark}

\begin{remark}
\label{rem:4.4}
For simplicity, we did not introduce in the above analysis the possible errors 
in the application of the $\lambda_i$ to the
approximations $\hat u_0$ and $\hat \phi_j$, and in the numerical solution to the system $\hat G \hat a=\hat w$,
which would simply result in similar conditions involving the extra tolerance parameters.
\end{remark}

\subsection {Point evaluation data} 
\label{SS:point}

We now want to extend  the numerical algorithm and its analysis
to the case when the data functionals $\lambda_j$, $j=1,\dots,m$, are point evaluations
\be 
\nonumber
\lambda_j(h):=h(x_j),
\quad x_j\in \overline\Omega, \quad j=1,\dots,m.
\ee 
Of course these functionals are not defined for general functions $h$ from  $H^1(\Omega)$. However, we can  formulate the recovery problem whenever the functionals
$\lambda_j$ are well defined on $K$.   We now discuss settings when this is possible.

Recall that any $u\in K$ can be written as $u=u_0+u_\cH$, where $u_0$ is the solution to \eref{model} with right side $f$ and $g=0$ and $u_\cH\in\cH^s(\Omega)$.  Point evaluation is well defined for the harmonic functions $u_\cH\in \cH^s(\Omega)$, provided the points are in $\Omega$.  In addition, they are well defined for points on the boundary $\Gamma$ if 
the space $\cH^s(\Omega)$ continuously embeds 
into $C(\o\Omega)$.  For $d=2$, this is the case when $s>1/2$ and when $d=3$, this is the case when $s>1$.  

Concerning $u_0$,  we will need some additional assumption to guarantee that point evaluation of $u_0$ makes sense at the data sites $x_j$, $j=1, \ldots,m$. For example, it is enough to assume that  $u_0$ is globally continuous or at least in a neighborhood of each of these points.  This can be guaranteed by assuming an appropriate regularity of $f$.  In this section, we assume that one of these settings holds.
We  then write
 \be 
 \nonumber
 w_j':=u_\cH(x_j)=w_j-u_0(x_j),\quad  j=1,\dots,m,
 \ee
and follow the algorithm  of the previous section
with the following simple modifications:  
  \vskip .1in
  \noindent{{\bf Modified Step 1:} {\it  We numerically find an approximation $\hat u_0$ to $u_0$, which in addition to
  \be 
  \nonumber
  \|u_0-\hat u_0\|_{H^1(\Omega)}\le \e_1,
  \ee 
 satisfies the requirement
 \be 
  \label{S13}
  \max_{i=1,\dots,m}|u_0(x_i)-\hat u_0(x_i)|\le \e_1.
  \ee 
  }
  \vskip .1in
  \noindent  
  To find such a $\hat u_0$ we   use standard or adaptive FEM methods.
   Given that $\hat u_0$ has been constructed, we define 
   $\hat w_j:=w_j-\hat u_0(x_j)$, $j=1,\ldots,m$, and thus, using \eref{S13},  we have
  $\|w'-\hat  w\|\le \e_1$. 

  \vskip .1in
    
  \noindent{{\bf Modified Step 2:}  {\it  For each $j=1,\dots,m$, we numerically compute an approximation $\hat \phi_j$ to $\phi_j$,  which in addition to
  \be 
  \nonumber
  \|\phi_j-\hat \phi_j\|_{H^1(\Omega)} \le \e_2,\quad j=1,\dots,m,
  \ee
  satisfies the condition
  \be 
  \label{S13phi}
  \max_{i=1,\dots,m}|\phi_j(x_i)-\hat \phi_j(x_i)|\le \e_2, \quad i,j=1,\dots,m.
  \ee 
  }
  \vskip .1in
  \noindent
  Condition \eref{S13phi} ensures that in Step 3 we can choose the entries $\hat g_{i,j}$ of the matrix $\hat G$ as 
 $$
 \hat g_{i,j}=\hat \phi_j(x_i), \quad i,j=1, \ldots,m.
 $$
 The Steps 3 and 4 of our algorithm remain the same as in the previous section.
 
 \begin{theorem}
 \label{T:pointtheorem}
 
  With the above modifications,  
  Theorem \ref{T:H2} holds with the exact same statement in
  this point evaluation setting.
  \end{theorem}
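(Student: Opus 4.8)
The plan is to observe that the proof of Theorem~\ref{T:H2} nowhere uses the fact that the functionals $\lambda_j$ are continuous on all of $H^1(\Omega)$; it only uses (i) the bounds $\|w'-\hat w\|\le \Lambda\e_1$ and $\|\hat w\|\le \Lambda_s+\Lambda\e_1$ from Step~1, (ii) the bound $|\hat g_{i,j}-g_{i,j}|\le \Lambda\e_2$ from Step~3, and (iii) the estimates \eref{hatphij}, \eref{eq2}, and the bound $\|\hat\phi_j\|_{H^1(\Omega)}\le C_0+\e_2$. So the task is simply to check that, in the point evaluation setting with the modified Steps~1 and 2, each of these ingredients still holds, possibly with the same constant or with $\Lambda$ harmlessly replaced by $1$.

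First I would record that, by Modified Step~1 and \eref{S13}, one has $|w_j'-\hat w_j|=|u_0(x_j)-\hat u_0(x_j)|\le\e_1$ for every $j$, hence $\|w'-\hat w\|\le\e_1$, which is even better than \eref{s111} (it is \eref{s111} with $\Lambda$ replaced by $1$). The bound \eref{boundnormw}, namely $\|w'\|\le\Lambda_s$, continues to hold because $w_j'=u_\cH(x_j)=\langle u_\cH,\phi_j\rangle_{\cH^s(\Omega)}$, so $|w_j'|\le\|u_\cH\|_{\cH^s(\Omega)}\|\phi_j\|_{\cH^s(\Omega)}\le\Lambda_s$ once we note $\|\phi_j\|_{\cH^s(\Omega)}^2=\lambda_j(\phi_j)=\phi_j(x_j)\le C_s\|\phi_j\|_{\cH^s(\Omega)}$, i.e.\ $\|\phi_j\|_{\cH^s(\Omega)}\le C_s=:\Lambda_s$ (using the embedding \eref{Hnorm1} and that $\cH^s(\Omega)\hookrightarrow C(\bar\Omega)$ in the stated ranges of $s$). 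Consequently $\|\hat w\|\le\Lambda_s+\e_1$, matching \eref{boundwhat} with $\Lambda\to1$. Next, Modified Step~2 and \eref{S13phi} give $|\hat g_{i,j}-g_{i,j}|=|\hat\phi_j(x_i)-\phi_j(x_i)|\le\e_2$, which is the Step~3 bound with $\Lambda\to1$; therefore $\|R\|_1\le m\e_2$, $\hat G$ is invertible for $\e_2$ small, and the definitions of $M,\hat M,\delta$ and the inequalities \eref{pp1}, \eref{Gerror} all go through verbatim with $\Lambda$ replaced by $1$. The quantity $C_0=\max_j\|\phi_j\|_{H^1(\Omega)}$ is still finite since $\phi_j\in\cH^s(\Omega)\subset H^1(\Omega)$, and \eref{hatphij} still gives $\|\hat\phi_j\|_{H^1(\Omega)}\le C_0+\e_2$.

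With these ingredients in hand, the estimate \eref{eq1}--\eref{finalbound} is reproduced line for line: \eref{eq2} gives $\|a^*\|_{\ell_1}\le mM\Lambda_s$; the intermediate vector $\tilde a$ with $G\tilde a=\hat w$ satisfies $\|\tilde a-a^*\|_{\ell_1}\le M m\|\hat w-w'\|\le mM\e_1$ and $\|\tilde a-\hat a\|_{\ell_1}\le\delta\|\hat w\|_{\ell_1}\le m(\Lambda_s+\e_1)\delta$; combining yields $\|u_\cH^*-\hat u_\cH\|_{H^1(\Omega)}\le mM\Lambda_s\e_2+(C_0+\e_2)(mM\e_1+m(\Lambda_s+\e_1)\delta)$, and adding $\|u_0-\hat u_0\|_{H^1(\Omega)}\le\e_1$ plus the identity $\|u-u^*\|_{H^1(\Omega)}=\|u_\cH-u_\cH^*\|_{H^1(\Omega)}\le R(K^\cH_{w'})_{H^1(\Omega)}=R(K_w)_{H^1(\Omega)}$ finishes the argument. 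In fact the resulting condition on $(\e_1,\e_2)$ is the condition \eref{eps} of Theorem~\ref{T:H2} with $\Lambda$ replaced by $1$ in the places where the $H^1$-continuity constant entered, so the exact same statement holds; one may simply keep $\Lambda\ge1$ (or set $\Lambda=1$) to assert Theorem~\ref{T:H2} word for word. The only genuinely delicate point, and the one I would state carefully rather than wave at, is the well-posedness of the objects in the point evaluation setting — that $\lambda_j$ restricted to $\cH^s(\Omega)$ is bounded so that the Riesz representers $\phi_j$ exist, that $\{\lambda_j\}$ is linearly independent on $\cH^s(\Omega)$ so $G$ is invertible, and that $u_0(x_j)$ makes sense under the assumed regularity of $f$; all of these were already arranged in \S\ref{SS:point}, so the proof reduces to the bookkeeping above, and I would present it as "the proof of Theorem~\ref{T:H2} applies verbatim, with the modified Steps~1--2 supplying \eref{s111}, \eref{boundwhat}, and the Step~3 bound with constant $1$ in place of $\Lambda$."
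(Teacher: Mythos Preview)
Your proposal is correct and follows exactly the paper's approach: the paper's entire proof reads ``The proof is the same as that of Theorem~\ref{T:H2},'' and you have simply made explicit the bookkeeping that this sentence leaves to the reader, verifying that the modified Steps~1--2 supply the needed inputs (with the constant $\Lambda$ effectively replaced by $1$). Your checks are sound; the only cosmetic point is that the paper's phrase ``the exact same statement'' is to be read with $\Lambda$ and $\Lambda_s$ reinterpreted as you describe, which you correctly flag.
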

   \vskip .1in
   \noindent
   \begin{proof} The proof is the same as that of Theorem \ref{T:H2}.
   \end{proof}

\section{Finite element approximations of the Riesz representers}
  \label{S:numerical} 

The computation of an approximation $\hat u_0$ to $u_0$, required in {\bf Step 1} of the
algorithm, can be carried out by standard finite element Galerkin schemes.
Depending on our knowledge on $f$ one can resort to known a priori estimates 
for $\e_1$, or may employ standard a posteriori estimates to ensure
that the underlying discretization provides a desired target accuracy.
Therefore, in the remainder of this section, we focus on a numerical implementation of {\bf Step 2} 
of the proposed algorithm.

Our proposed numerical algorithm for {\bf Step 2} is to use finite element methods to generate the approximations
$\hat \phi_j$ of the Riesz representers $\phi_j$.  Note that each of 
the functions $\phi_j$ is harmonic on $\Omega$  but we do not
require that the sought after numerical approximation $\hat \phi_j$ is itself harmonic but only that it provides an accurate  $H^1(\Omega)$ approximation to $\phi_j$.  This allows us to use finite element approximations which are themselves not harmonic.
However, the $\hat \phi_j$ will necessarily have to  be close to being harmonic since they approximate a harmonic function in the $H^1(\Omega)$ norm.  

 Our numerical approach to constructing a $\hat \phi_j$, discussed in \S\ref{SS:gal}, 
is to use discretely harmonic finite elements.   Here, $\hat \phi_j$ is   a discrete harmonic
extension   of a finite element approximation to the trace $\psi_j=T(\phi_j)$ computed by
solving a Galerkin problem.  In order to reduce computational cost (see Remark \ref{rem:cost}), we incorporate discrete harmonicity
as constraints and  introduce in \S\ref{SS:saddle} an {\em equivalent
saddle point formulation} that has the same solution $\hat \phi_j$, and which is the one
that we practically employ in the numerical experiments given in \S\ref{SS:Rr}. 
We  give in \S\ref{SS:apriori} an a priori analysis
with error bounds for $\| \phi_j-\hat \phi_j\|_{H^1}$ in terms of the 
finite element mesh size, in the case where the measurement functionals 
are continuous on $H^1(\Omega)$. These error bounds can in turn be used to ensure the prescribed
accuracy $\e_2$ in {\bf Step 2}. We finally discuss in \S\ref{SS:pointvalues} 
the extensions to the point value case where pointwise error bounds on $|\hat \phi_j(x_i)-\hat \phi_j(x_i)|$
are also needed.

In order to simplify notation, we describe these procedures for finding 
an approximation $\hat \phi$ to the Riesz representer $\phi\in \cH^s=\cH^s(\Omega)$ 
of a given linear functional $\nu$ on $\cH^s$. This numerical procedure is then applied with $\nu=\lambda_j$, to find the numerical approximations $\hat \phi_j$ to the Riesz representer $\phi_j$. 

For simplicity, throughout this section, we work under the assumption that
$\Omega$ is a polygonal domain  of $\R^2$ or polyhedral domain of $\R^3$.  This allows us to define finite element
spaces based on triangular or simplicial partitions of $\Omega$ that in turn induce similar partitions
on the boundary. We assume that $\frac 1 2 < s<\frac 3 2$, which is the relevant
range for such domains, as explained in \S \ref{S:hs}. Our analysis can be extended to more general 
domains with smooth or piecewise smooth boundaries, for example by using isoparametric elements 
near the boundary, however at the price of considerably higher technicalities.

\subsection{A Galerkin formulation}
\label{SS:gal}

Let $s>1/2$ be fixed and assume that $\nu$ is any linear form continuous on $\cH^s(\Omega)$ with norm 
\be  
\label{Cs}
C_s:=\max \{\nu(v) \; : \; \|v\|_{\cH^s(\Omega)}=1 \}
\ee
In view of the definition of the
$\cH^s$ norm, the representer $\phi\in \cH^s(\Omega)$ of $\nu$ for the
corresponding inner product
can be defined as
$$
\phi=E\psi,
$$
where $E$ is the harmonic extension operator of \eref{defE} below and where $\psi\in H^s(\Gamma)$ is the solution to the following variational problem:
\begin{equation}\label{exactE}
\<\psi,\eta\>_{H^s(\Gamma)}=\mu(\eta):=\nu(E\eta), \quad \eta\in H^s(\Gamma).
\end{equation}
 Note that this problem admits a unique solution and we have
$$
\|\psi\|_{H^s(\Gamma)}=\|\phi\|_{\cH^s(\Omega)}=C_s.
$$
Recall that
\be 
\label{defE}
Eg:={\rm argmin}\{\|\nabla v\|_{L^2(\Omega)} \; : \; v_\Gamma=g\}.
\ee  
The function $Eg$ is characterized by $T(Eg)=g$ and
$$
\int_\Omega \nabla Eg \cdot \nabla v=0, \quad v\in H^1_0(\Omega).
$$
From the left inequality in \eref{LMbound}, one has
\be
\|Eg\|_{H^1(\Omega)}\leq C_E\|g\|_{H^{1/2}(\Gamma)}, \quad g\in H^{1/2}(\Gamma),
\label{stabE}
\ee
where $C_E$ can be taken to be the inverse of the constant $c_0$ in \iref{LMbound}. 


Therefore, one approach to discretizing this problem is the following: consider 
finite element spaces ${\mathbb V}_h$ associated to a family of 
meshes $\{\cT_h\}_{h>0}$ of $\Omega$,
where as usual $h$ denotes the maximum meshsize.
We define ${\mathbb T}_h$ to be the space obtained by restriction
of ${\mathbb V}_h$ on the boundary $\Gamma$, that is,
$$
{\mathbb T}_h=T({\mathbb V}_h)
$$
Since we have assumed that $\Omega$ is
a polygonal or polyhedral domain, the space ${\mathbb T}_h$ is a standard finite element space for the boundary mesh. 
Having also assumed that $s<3/2$, when using standard $H^1$ conforming finite elements such as ${\mathbb P}_k$-Lagrange finite elements, 
we are ensured that ${\mathbb T}_h\subset H^s(\Gamma)$. 
We  denote by
$$
{\mathbb W}_h:=\{v_h\in {\mathbb V}_h\; : \; T(v_h)=0\},
$$
the finite element space with homogeneous boundary conditions.

We define the discrete harmonic extension operator $E_h$ associated to ${\mathbb V}_h$ as follows : for $g_h\in {\mathbb T}_h$, 
$$
E_h g_h:={\rm argmin}\{\|\nabla v_h\|_{L^2(\Omega)} \; : \; v_h\in {\mathbb V}_h, \; T(v_h)=g_h\}.
$$
Note that $E_hg_h$ is not harmonic.
Similar to $E$, the function $E_h g_h$ is characterized by $T(E_hg_h)=g_h$ and 
$$
\int_\Omega \nabla E_hg_h \cdot \nabla v_h=0, \quad v_h\in {\mathbb W}_h.
$$
Then, we define the approximation $\phi_h\in {\mathbb V}_h$ to $\phi$ as
$$
\phi_h=E_h\psi_h,
$$
where $\psi_h\in {\mathbb T}_h$ is the solution to the following variational problem:
\be
\<\psi_h,g_h\>_{H^s(\Gamma)}=\mu_h(g_h):=\nu(E_hg_h), \quad g_h\in {\mathbb T}_h.
\label{galer}
\ee
Here we are assuming that, in addition to be defined on $\cH^s(\Omega)$, the functional $\nu$ is
also well defined on the space $\mathbb{V}_h$. We shall further consider separately two instances where
this is the case : (i) $\nu$ is a continuous functional on $H^1(\Omega)$ and (ii) $\nu$ is a point evaluation functional.

Note
that \eref{galer} is {\em not} the straightforward Galerkin
approximation of \eref{exactE}, since $\mu_h$ differs from $\mu$. This
complicates somewhat the further conducted convergence analysis.  The  numerical method we employ for computing
$\phi_h$ is to numerically solve an equivalent saddle point problem described below.

We apply the  strategy  \eref{galer} to $\nu:=\lambda_j$ for each $j$ and thereby obtain the
corresponding approximations $\hat \phi_j:=\phi_h\in \mathbb V_h$. Since {\bf Step 2} requires that we
guarantee the error $\|\phi_j-\hat \phi_j\|_{H^1}\leq \e_2$, our main goal in this section is to
establish a quantitative convergence bound for $\|\phi-\phi_h\|_{H^1}$. We also need
to establish a pointwise convergence bound for $|\phi(x)-\phi_h(x)|$ 
when considering the modified version of {\bf Step 2} in the case that the measurements are  point values.

Similar to $E$, it will be important in our analysis to control the stability of $E_h$
in the sense of a bound
\be
\|E_hg_h\|_{H^1(\Omega)}\leq D_E\|g_h\|_{H^{1/2}(\Gamma)}, \quad g_h\in {\mathbb T}_h,
\label{stabEh}
\ee
with a constant $D_E$ that is independent of $h$. However, such a uniform bound is not readily
inherited from the stability of $E$. As observed in \cite{DS}, 
its validity is known to depend on the existence of uniformly
$H^1$-stable  linear projections onto ${\mathbb V}_h$ preserving the homogeneous boundary
condition, that is, projectors $P_h$ onto ${\mathbb V}_h$ that satisfy
\be
P_h(H^1_0(\Omega))={\mathbb W}_h \quad{\rm and}\quad \|P_hv\|_{H^1(\Omega)}\leq B\|v\|_{H^1(\Omega)},\quad v\in H^1(\Omega),
\label{szproj}
\ee
for some $B$ independent of $h$. One straightforward consequence of this is that if $v\in H^1(\Omega)$ with $v|_{\Gamma} \in \mathbb T_h$ then $P_h(v)|_{\Gamma} = v|_{\Gamma}$.

We next show   that the existence of such projectors is sufficient to guarantee the stability of $E_h$. For this, suppose    \eref{szproj} holds and $g_h\in {\mathbb T}_h$.  Then $P_hEg_h\in {\mathbb V}_h$     and the trace of $P_hEg_h$ is equal to $g_h$. 
It follows that
$$
\begin{array}{lll}
\| E_hg_h-P_hEg_h\|_{H^1(\Omega)}&\leq& C_P
\| \nabla E_hg_h-\nabla P_hEg_h\|_{L^2(\Omega)}\\
&\leq& C_P\| \nabla E_hg_h\|_{L_2(\Omega)} +C_P\| \nabla P_hEg_h\|_{L^2(\Omega)},\\
&\leq& 2C_P\| P_hEg_h\|_{H^1(\Omega)},
\end{array}
$$
where  $C_P$ is the Poincar\'e constant for $\Omega$.  Here,      the last inequality follows from the minimizing property of $E_hg_h$.
Thus, by triangle inequality, one has
$$
\|E_hg_h\|_{H^1(\Omega)}\leq (1+2C_P)\| P_hEg_h\|_{H^1(\Omega)}\leq (1+2C_P)B\| Eg_h\|_{H^1(\Omega)}\leq (1+2C_P)BC_E\|g_h\|_{H^{1/2}(\Gamma)},
$$
which is \iref{stabEh} with $D_E=(1+2C_P)BC_E$.

The requirement of  uniformly stable projectors $P_h$ with the property \eref{szproj} is satisfied by projectors of Scott-Zhang type \cite{SZ} when the family
of meshes $\{\cT_h\}_{h>0}$ is shape regular, that is, when  all elements $T$ have
a uniformly bounded ratio between their diameters $h(T)$ and the diameter $\rho(T)$ of their inner circle. 
In other words, the shape parameter
\be
\label{shape}
\sigma=\sigma(\{\cT_h\}_{h>0}):=\sup_{h>0}\max_{T\in \cT_h}\frac {h(T)}{\rho(T)},
\ee
is finite.
In all that follows in the present paper, we work under such an assumption on the meshes $\cT_h$.  Therefore, \eref{stabEh} holds when  ${\mathbb V}_h$ is subordinate to such partitions.}

\subsection{A saddle point formulation}
\label{SS:saddle}

Before attacking the convergence analysis, we need to stress an important 
computational variant of the above described Galerkin method, that leads to
the same solution $\phi_h$. It is based on imposing
harmonicity via a Lagrange multiplier. For this purpose, we introduce the Hilbert space $X^s(\Omega)$
that consists of all $v\in H^1(\Omega)$ such that $v_\Gamma\in H^s(\Gamma)$, and equip it with the norm
$$
\|v\|_{X^s(\Omega)}:=\(\|v_\Gamma\|_{H^s(\Gamma)}^2+\|\nabla v\|_{L^2(\Omega)}^2\)^{1/2}.
$$
Then, the Riesz representer $\phi$ is equivalently determined as the solution 
of the saddle point problem: find $(\phi,\pi)\in X^s(\Omega)\times H^1_0(\Omega)$ 
such that
\be
\label{sp}
\begin{array}{lcl}
 a(\phi,v)+b(v,\pi)&=&\nu(v),\quad    v\in X^s(\Omega) \\
b(\phi,z) &=& 0, \qquad \ z\in H^1_0(\Omega),
\end{array}
\ee
where the bilinear forms are given by
$$
a(\phi,v):=\<\phi_\Gamma,v_\Gamma\>_{H^s(\Gamma)}\quad{\rm and}\quad
b(v,\pi):=\<\nabla v,\nabla \pi\>_{L^2(\Omega)}.
$$
Clearly the second equation in \eref{sp} means that $\phi$ is harmonic and testing the first equation with a $v\in \cH^s(\Omega)$
shows that $\phi$ is the Riesz representer of $\mu$.

This saddle point formulation is well-posed: the bilinear forms $a$ and $b$ obviously satisfies the continuity properties
$$
a(\phi,v)\leq \|\phi_\Gamma\|_{H^s(\Gamma)}\|v_\Gamma\|_{H^s(\Gamma)}\leq \|\phi\|_{X^s(\Omega)}\|v\|_{X^s(\Omega)}, \quad \phi,v\in X^s(\Omega),
$$
and for the standard norm $\|v\|_{H^1_0(\Omega)}=\|\nabla v\|_{L^2(\Omega)}$,
$$
b(v,\pi)\leq \|\nabla v\|_{L^2(\Omega)} \|\nabla \pi\|_{L^2(\Omega)} \leq  \|v\|_{X^s(\Omega)} \|\pi\|_{H^1_0(\Omega)}, \quad v\in X^s(\Omega), \; \pi\in H^1_0(\Omega).
$$
In addition, for all $v\in \cH^s(\Omega)$, one has
$$
\|v\|_{X^s(\Omega)}^2\leq \|v_\Gamma\|_{H^s(\Gamma)}^2+\|v\|_{H^1(\Omega)}^2
\leq \|v_\Gamma\|_{H^s(\Gamma)}^2+C_E^2\|v\|_{H^{1/2}(\Gamma)}^2 
\leq (1+C_E^2)a(v,v),
$$
which shows that $a$ is coercive on the null space of $b$ in $X^s(\Omega)$.  Finally, the bilinear form $b$
satisfies the inf-sup condition
$$
\inf_{\pi\in {H^1_0(\Omega)}}\sup_{v\in {X^s(\Omega)}} \frac{b(v,\pi)}{\|v\|_{X^s(\Omega)}\|\pi\|_{H^1_0(\Omega)}}
\geq \inf_{\pi\in {H^1_0(\Omega)}}\frac{b(\pi,\pi)}{\|\pi\|_{X^s(\Omega)}\|\pi\|_{H^1_0(\Omega)}}=1.
$$
Therefore the standard LBB theory ensures existence and uniqueness of the solution pair $(\phi,\pi)$.

We now discretize the saddle point problem by searching for 
$(\phi_h,\pi_h)\in {\mathbb V}_h\times {\mathbb W}_h$ such that
$$
\begin{array} {llll}
& a(\phi_h,v_h)+b(v_h,\pi_h)&=\nu(v_h), \quad \quad &v_h\in {\mathbb V}_h\\
&b(\phi_h,z_h) &=0,  & z_h\in{\mathbb W}_h.
\end{array}
$$

\begin{remark}
The equivalence with the previous derivation of $\phi_h$ by the Galerkin approach is easily checked: the second equation
tells us that the solution $\phi_h$ is discretely harmonic, and therefore equal to 
$E_h\psi_h$ for some $\psi_h\in {\mathbb T}_h$. Then taking $v_h$ of the form $E_hg_h$ for $g_h\in {\mathbb T}_h$
gives us exactly the Galerkin formulation \iref{galer}.
\end{remark}

This discrete saddle point problem is uniformly well-posed when we equip the space ${\mathbb W}_h$
with the $H^1_0$ norm, and the space ${\mathbb V}_h$
with the $X^s$ norm. The continuity of $a$ and $b$, and the inf-sup 
condition for $b$ follow by the exact same arguments applied to the finite
element spaces, with the same constants. On the other hand, we need to 
check the uniform ellipticity of $a$ in the space ${\mathbb V}_{h}^\cH\subset {\mathbb V}_h$
of discretely harmonic functions, which can be defined as
$$
{\mathbb V}_{h}^\cH:=\{v_h\in {\mathbb V}_{h} \; :\; b(v_h,z_h)=0, \; z_h\in {\mathbb W}_{h}\},
$$
or equivalently as the image of ${\mathbb T}_h$ 
by the operator $E_h$. For all $v_h\in {\mathbb V}_{h,H}$
and $g_h=T(v_h)$, we write
$$
\|v_h\|_{X^s(\Omega)}^2\leq \|g_h\|_{H^s(\Gamma)}^2+\|v_h\|^2_{H^1(\Omega)}
\leq \|g_h\|_{H^s(\Gamma)}^2+D_E^2\|g_h\|_{H^{1/2}(\Gamma)}^2 
\leq (1+D_E^2)a(v_h,v_h),
$$
where we have used the discrete stability of $E_h$. 

\begin{remark}
\label{rem:cost}
In practice, we use this discrete saddle point formulation
for the computation of $\phi_h$ rather than the equivalent Galerkin formulation \iref{galer}
for the following reason. Let $N_h:= {\rm dim}\,\mathbb{V}_h$, $M_h
:= {\rm dim}\,\mathbb{W}_h$, and $P_h:= {\rm dim}\,\Tr = N_h-M_h$.
Computing the right hand side load vector in \eref{exactE} requires computing
discretely harmonic extensions of $P_h$ basis functions,
which means solving $P_h$ linear systems of dimension $M_h$.
In addition one has to solve the sparse linear system \eref{galer} of size 
$P_h$ followed by another system of size $M_h$ to compute $\phi_h=E_h\psi_h$. Using optimal iterative solvers of linear complexity
the minimum amount of work needed to compute one representer scales then like
$$
P_hM_h \sim   N_h^{1+ \frac{d-1}{d}} .
$$
while solving the saddle point problem requires the order of $N_h + M_h\sim N_h$ operations. 
On the other hand the characterization of $\phi_h$ through
\iref{galer} appears to be more convenient when deriving error bounds
for $\|\phi-\phi_h\|_{H^1(\Omega)}$. This is the objective of the next sections.
\end{remark}

\subsection{Preparatory results} 
\label{SS:ellipticGamma}

In the derivation of error bounds for $\|\phi-\phi_h\|_{H^1(\Omega)}$, we will
need several ingredients.

The first is the following lemma that quantifies the
perturbation induced by using $E_h$ in place of $E$. 

\begin{lemma}
\label{lemmaEEh}
For any $g_h\in {\mathbb T}_h$, one has
\be
\|(E-E_h)g_h\|_{H^1(\Omega)} \leq C_2h^{r-1} \|g_h\|_{H^s(\Gamma)}.
\label{EEh}
\ee
where  $C_2$ 
depends on $r$ and $s$, the shape-parameter $\sigma$, and on the geometry of $\Omega$.
\end{lemma}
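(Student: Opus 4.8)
The plan is to estimate $\|(E-E_h)g_h\|_{H^1(\Omega)}$ by exploiting the fact that both $Eg_h$ and $E_hg_h$ share the same trace $g_h$ on $\Gamma$, so their difference lies in $H^1_0(\Omega)$, and then using the Galerkin-type orthogonality satisfied by $E_h$. Concretely, write $w:=Eg_h-E_hg_h$. Since $T(Eg_h)=T(E_hg_h)=g_h$, we have $w\in H^1_0(\Omega)$, and by the Poincar\'e inequality it suffices to bound $\|\nabla w\|_{L^2(\Omega)}$. Now $E_hg_h$ is the $H^1$-orthogonal (in the $\|\nabla\cdot\|_{L^2}$ sense) projection of $Eg_h$ onto the affine space $\{v_h\in\mathbb{V}_h: T(v_h)=g_h\}$; equivalently, $\int_\Omega \nabla(Eg_h-E_hg_h)\cdot\nabla z_h=0$ for all $z_h\in \mathbb{W}_h$, using that $Eg_h$ is harmonic and hence $\int_\Omega\nabla Eg_h\cdot\nabla z_h=0$ as well. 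Therefore $E_hg_h$ is the best $\|\nabla\cdot\|_{L^2}$-approximation to $Eg_h$ from that affine set, and $\|\nabla w\|_{L^2(\Omega)} = \inf\{\|\nabla(Eg_h-v_h)\|_{L^2(\Omega)} : v_h\in\mathbb{V}_h,\ T(v_h)=g_h\}$.

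The next step is to produce one admissible competitor $v_h\in\mathbb{V}_h$ with $T(v_h)=g_h$ and a good bound on $\|\nabla(Eg_h-v_h)\|_{L^2(\Omega)}$. The natural choice is $v_h:=P_h(Eg_h)$, where $P_h$ is the uniformly $H^1$-stable, boundary-condition-preserving projector from \eref{szproj}: indeed, by the property noted after \eref{szproj}, since $(Eg_h)|_\Gamma = g_h\in\mathbb{T}_h$ we get $P_h(Eg_h)|_\Gamma = g_h$, so $v_h$ is admissible. Then
$$
\|\nabla w\|_{L^2(\Omega)} \le \|\nabla(Eg_h - P_h Eg_h)\|_{L^2(\Omega)} \le \|Eg_h - P_h Eg_h\|_{H^1(\Omega)}.
$$
For this last quantity I would invoke the standard approximation estimate for $P_h$ on shape-regular meshes together with the elliptic regularity $\cH^s(\Omega)\subset H^r(\Omega)$ from \eref{hshr}: since $Eg_h$ is harmonic with trace $g_h\in H^s(\Gamma)$, it lies in $\cH^s(\Omega)$ with $\|Eg_h\|_{\cH^s(\Omega)}=\|g_h\|_{H^s(\Gamma)}$, hence $\|Eg_h\|_{H^r(\Omega)}\le C_1\|g_h\|_{H^s(\Gamma)}$ for the admissible $r>1$ of \eref{definer}. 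The Scott-Zhang-type estimate $\|v-P_hv\|_{H^1(\Omega)}\le Ch^{r-1}\|v\|_{H^r(\Omega)}$ (valid for $1<r\le k+1$ with $\mathbb{V}_h$ of degree $k$, constant depending on $\sigma$ and $\Omega$) then yields $\|Eg_h-P_hEg_h\|_{H^1(\Omega)}\le Ch^{r-1}\|g_h\|_{H^s(\Gamma)}$. Combining with the Poincar\'e step gives \eref{EEh} with $C_2$ depending on $r,s,\sigma$ and the geometry of $\Omega$, as claimed.

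The main obstacle is making the elliptic-regularity and approximation chain fully rigorous at the right smoothness level: one must be sure that the exponent $r$ appearing in the bound is exactly the $r=\min\{s+\tfrac12, r^*\}$ of \eref{definer}, that this $r$ is compatible with the polynomial degree of $\mathbb{V}_h$ (so the $h^{r-1}$ rate is actually attained by $P_h$), and that the Scott-Zhang interpolation bound holds in fractional $H^r$ norms with a constant depending only on the shape parameter. A secondary subtlety is that $E_hg_h$ being the $\|\nabla\cdot\|_{L^2}$-best approximation does not by itself control the full $H^1$ norm of $w$ — but here $w\in H^1_0(\Omega)$, so Poincar\'e closes the gap, and this is why the argument is clean. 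Everything else (the orthogonality relation, admissibility of $P_hEg_h$, the triangle inequality) is routine.
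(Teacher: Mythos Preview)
Your proof is correct and follows essentially the same approach as the paper: orthogonality gives best approximation of $Eg_h$ by $E_hg_h$ in the $\|\nabla\cdot\|_{L^2}$ seminorm over the trace-constrained affine set, $P_hEg_h$ serves as the competitor with the correct trace, and Poincar\'e upgrades the seminorm bound to $H^1$. The only cosmetic difference is that the paper bounds $\|Eg_h-P_hEg_h\|_{H^1}$ via the $H^1$-stability of $P_h$ (yielding quasi-optimality with constant $1+B$) followed by standard best-approximation in $\mathbb{V}_h$, whereas you invoke the Scott--Zhang approximation estimate directly; both routes are standard and equivalent.
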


\noindent
\begin{proof}
From the properties of $E$ and $E_h$, one has
$$
\<\nabla (E-E_h)g_h,\nabla v_h\>=0, \quad v_h\in {\mathbb W}_h
$$
This orthogonality property shows that 
$$
\|\nabla (Eg_h-E_hg_h)\|_{L^2(\Omega)}  \leq
\|\nabla (Eg_h-E_hg_h-v_h)\|_{L^2(\Omega)}, \quad v_h\in {\mathbb W}_h,
$$
and therefore 
$$
\|\nabla (Eg_h-E_hg_h)\|_{L^2(\Omega)}  \leq \min_{v_h\in {\mathbb V}_h,T(v_h)=g_h} \|\nabla(Eg_h-v_h)\|_{L^2(\Omega)}
\leq \|\nabla(Eg_h-P_hEg_h)\|_{L^2(\Omega)},
$$
where $P_h$ is the stable projector that preserves homogeneous boundary condition, see \iref{szproj}. 
It follows that
$$
\|\nabla (Eg_h-E_hg_h)\|_{L^2(\Omega)}
\leq (1+B)\min_{v_h\in {\mathbb V}_h} \|Eg_h-v_h\|_{H^1(\Omega)},
$$ 
where $B$ is the uniform $H^1$-stability bound on $P_h$.
By standard finite element approximation estimates and \iref{hshr}, we have
$$
\min_{v_h\in {\mathbb V}_h} \|Eg_h-v_h\|_{H^1(\Omega)}  \leq Ch^{r-1} \|Eg_h\|_{H^r(\Omega)} \leq CC_1h^{r-1}\|g_h\|_{H^s(\Gamma)},
$$
where the constant $C$ depends on $r$ and on the shape parameter $\sigma$.
The estimate \iref{EEh} follows by Poincaré inequality since $Eg_h-E_hg_h\in H^1_0(\Omega)$. \end{proof}

The second ingredient concerns the regularity of the solution
to the variational problem
\be
\<\kappa,v\>_{H^s(\Gamma)}=\gamma(v),\quad v\in H^s(\Gamma).
\label{hsvargamma}
\ee
For a general linear functional $\gamma \in H^{-s}(\Gamma)$, that is,  continuous on $H^s(\Gamma)$, we are only ensured 
that the solution $\kappa$ is bounded in $H^s(\Gamma)$, with $\|\kappa\|_{H^s(\Gamma)}=\|\gamma\|_{H^{-s}(\Gamma)}$. 
However, if $\gamma$ has some extra regularity, this then translates into additional regularity of $\kappa$.
 
As a simple example, consider the case where $\gamma$ is in addition continuous on $L^2(\Gamma)$, that is 
\be
\gamma(v)=\<g,v\>_{L^2(\Gamma)},
\label{gammag}
\ee
for some $g\in L^2(\Gamma)$, and assume that we work with $s=1$ and a polygonal domain. Then the variational problem
has a solution $\kappa\in H^1(\Gamma)$ and in addition $\kappa\in H^2(E)$ for each edge $E$ with weak second derivative
given by
$$
-\kappa''=g-\kappa\in L^2(\Gamma).
$$
In turn, standard finite element approximation estimates yield
$$
\min_{\kappa_h\in {\mathbb T}_h} \|\kappa-\kappa_h\|_{H^1(\Gamma)} \leq Ch \|g\|_{L_2(\Gamma)},
$$
with a constant $C$ that depends on the shape parameter $\sigma$.

Of course, gain of regularity theorems for elliptic problems are known in various contexts.
However, we have not found a general treatment of gain of regularity that addresses the setting of this paper. In going forward, we do not wish to systematically explore this gain in regularity and approximability for more general values of $s$
and smoothness of $\gamma$ since this would  significantly enlarge the scope of this paper. Instead, we state it as the following general assumption.\\

\noindent {\bf Assumption R:} for $s>\frac 12$ and $\delta>0$, there exists $r(s,\delta)>0$ such that
if $\gamma\in H^{-s+\delta}(\Gamma)$ for some $\delta>0$, 
then the solution $\kappa$ to \iref{hsvargamma}
satisfies
\be
\min_{\kappa_h\in {\mathbb T}_h} \|\kappa-\kappa_h\|_{H^s(\Gamma)} \leq Ch^{r(s,\delta)} \|\gamma\|_{H^{-s+\delta}(\Gamma)},
\label{approxetas}
\ee
with a constant $C$ that depends on $s$, $\delta$,  and on the shape parameter $\sigma$.

The above example shows that  $r(1,1)=1$ for a polygonal domain.} We expect that this assumption always holds for the range $\frac 1 2 <s<\frac 3 2$
that is considered here.

\subsection{An a priori error estimate for $\|\phi-\phi_h\|_{H^1(\Omega)}$}\label{SS:apriori}

In this section, we work under the assumption that the linear 
functional $\nu$ is continuous on $H^1(\Omega)$ with norm
$$
C_\nu:=\max \{\nu(v) \; : \; \|v\|_{H^1(\Omega)}=1 \}.
$$
Let us first check that this   assumption implies a uniform a priori bound on $\|\psi_h\|_{H^s(\Gamma)}$.  Indeed, we may write
$$
\|\psi_h\|^2_{H^s(\Gamma)}=\<\psi_h,\psi_h\>_{H^s(\Gamma)}=\nu(E_h\psi_h) \leq C_\nu D_E \|\psi_h\|_{H^{1/2}(\Gamma)}
\leq C_\nu D_E \|\psi_h\|_{H^{s}(\Gamma)},
$$
 where the first inequality used \eref{stabEh}.   Therefore, 
\be
\|\psi_h\|_{H^s(\Gamma)} \leq C_\nu D_E.
\label{appsih}
\ee

We have seen in \S \ref{S:hs} that the function $\phi$ belongs to the standard Sobolev space $H^r(\Omega)$ for $r$ defined in \eref{definer}.  We use this $r$ throughout this section.
 From \eref{hshr}, there exists a constant $C_1$ such that
\be
\|Ew\|_{H^r(\Omega)} \leq C_1\|w\|_{H^s(\Gamma)},  \quad  w\in H^s(\Gamma).
\label{Ers}
\ee
As noted in \S \ref{S:hs}, the amount of smoothness $r$ depends both on $s$ and on the geometry of $\Omega$. 
What is important for us is that since $s>1/2$, we have shown in Section~\ref{S:hs}  that $r>1$. For example, for smooth domains it is $r=s+\frac 1 2$.
The   fact that $\phi\in H^r(\Omega)$ hints that the finite element approximation $\phi_h$ to $\phi$ should converge
with a certain rate.

This is indeed the case as given in the following result.

\begin{theorem}
\label{theorate}
Under {\em \bf Assumption R}, we have
\be
\label{convphibound}
\|\phi-\phi_h\|_{H^1(\Omega)}\leq CC_\nu h^t,
\ee
where $t= \min\{r-1,r(s,s+ \frac 12)+r(s,s-\frac 12)\}$. The constant $C$ depends on $s$ and on the geometry of $\Omega$, and on the family of meshes through the shape parameter $\sigma$.
\end{theorem}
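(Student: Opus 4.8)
The plan is to follow the standard two–step strategy for a fractional–order boundary Galerkin method — a Strang/C\'ea quasi-optimality estimate in the energy norm followed by an Aubin–Nitsche duality argument — but carried out carefully because of the \emph{variational crime} caused by replacing $E$ by $E_h$ (so that $\mu_h\neq\mu$ on $\mathbb T_h$) and because the target norm $\|\cdot\|_{H^1(\Omega)}$ is strictly weaker than the natural energy norm $\|\cdot\|_{\cH^s(\Omega)}$, which is exactly what lets duality buy extra powers of $h$.

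First I would reduce the bulk error to a boundary error. Writing $\phi=E\psi$ and $\phi_h=E_h\psi_h$, split
$$
\phi-\phi_h=E(\psi-\psi_h)+(E-E_h)\psi_h .
$$
By Lemma~\ref{lemmaEEh} and the a priori bound \eqref{appsih}, the second term obeys $\|(E-E_h)\psi_h\|_{H^1(\Omega)}\le C_2 h^{r-1}\|\psi_h\|_{H^s(\Gamma)}\le C_2 D_E C_\nu h^{r-1}$, which already produces the candidate exponent $r-1$. Since $E$ is bounded from $H^{1/2}(\Gamma)$ into $H^1(\Omega)$ by \eqref{stabE} and $H^s(\Gamma)\hookrightarrow H^{1/2}(\Gamma)$, it then suffices to estimate the trace error $\psi-\psi_h$ in a suitable boundary norm. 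For the energy estimate, testing \eqref{exactE} with $g_h\in\mathbb T_h$ and subtracting \eqref{galer} gives the error equation $\langle\psi-\psi_h,g_h\rangle_{H^s(\Gamma)}=\nu\big((E-E_h)g_h\big)$ for all $g_h\in\mathbb T_h$; with Lemma~\ref{lemmaEEh} this shows the consistency functional on $\mathbb T_h$ is $O(C_\nu h^{r-1})$, and a Strang-type argument yields
$$
\|\psi-\psi_h\|_{H^s(\Gamma)}\le C\Big(\min_{g_h\in\mathbb T_h}\|\psi-g_h\|_{H^s(\Gamma)}+C_\nu h^{r-1}\Big).
$$
Because $\psi$ solves \eqref{exactE} with datum $\mu=\nu\circ E\in H^{-1/2}(\Gamma)$ — indeed $|\mu(\eta)|\le C_\nu C_E\|\eta\|_{H^{1/2}(\Gamma)}$ by \eqref{stabE} — and $H^{-1/2}(\Gamma)=H^{-s+\delta}(\Gamma)$ with $\delta=s-\frac12>0$, \textbf{Assumption R} bounds the best-approximation term by $Ch^{r(s,s-\frac12)}$; hence $\|\psi-\psi_h\|_{H^s(\Gamma)}\le CC_\nu h^{\min\{r(s,s-\frac12),\,r-1\}}$.

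The crucial last step is an Aubin–Nitsche duality passing from the energy norm to $\|\phi-\phi_h\|_{H^1(\Omega)}$, equivalently to a boundary norm of $\psi-\psi_h$ lower than $\|\cdot\|_{H^s(\Gamma)}$, up to the already-controlled $O(C_\nu h^{r-1})$ term. For the dual datum I would solve the adjoint problem $\langle\kappa,v\rangle_{H^s(\Gamma)}=\gamma(v)$, insert its best $\mathbb T_h$-approximation $\kappa_h$, use the error equation to replace $\langle\psi-\psi_h,\kappa_h\rangle_{H^s(\Gamma)}$ by an $O(C_\nu h^{r-1})$ consistency term, and bound the leftover $\langle\psi-\psi_h,\kappa-\kappa_h\rangle_{H^s(\Gamma)}$ by the energy error times the adjoint best-approximation error. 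Applying \textbf{Assumption R} to the adjoint problem — whose datum carries one more half-derivative than the primal datum, i.e. the index $\delta=s+\frac12$, reflecting that $\|\phi-\phi_h\|_{H^1(\Omega)}$ only probes the trace error in a negative-order norm — contributes the factor $h^{r(s,s+\frac12)}$, and multiplying by the energy rate $h^{r(s,s-\frac12)}$ gives the second candidate exponent $r(s,s+\frac12)+r(s,s-\frac12)$. Keeping the smaller of the two exponents yields $t=\min\{r-1,\,r(s,s+\frac12)+r(s,s-\frac12)\}$, with a constant depending only on $s$, the geometry of $\Omega$, and the shape parameter $\sigma$. I expect the bulk of the work — and the main obstacle — to be bookkeeping: making every variational-crime contribution uniformly of order $h^{r-1}$ via Lemma~\ref{lemmaEEh} and the $h$-independent stability constants $C_E,D_E$ of \eqref{stabE} and \eqref{stabEh}, and correctly identifying which negative-order norm of $\psi-\psi_h$ controls $\|\phi-\phi_h\|_{H^1(\Omega)}$ so that \textbf{Assumption R} is invoked with the right index $\delta$.
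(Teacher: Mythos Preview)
Your proposal is correct and follows essentially the same route as the paper: the same splitting $\phi-\phi_h=E(\psi-\psi_h)+(E-E_h)\psi_h$, the same use of Lemma~\ref{lemmaEEh} together with the a~priori bound~\eqref{appsih} for the consistency contribution, Assumption~R with $\delta=s-\tfrac12$ for the primal best-approximation, and an Aubin--Nitsche argument invoking Assumption~R with $\delta=s+\tfrac12$ for the dual to pass from the $H^s(\Gamma)$ energy norm to the $H^{1/2}(\Gamma)$ norm that controls $\|E(\psi-\psi_h)\|_{H^1(\Omega)}$. The only cosmetic difference is that the paper introduces an intermediate \emph{consistent} Galerkin solution $\bar\psi_h\in\mathbb T_h$ (defined by $\langle\bar\psi_h,g_h\rangle_{H^s(\Gamma)}=\mu(g_h)$) so that exact Galerkin orthogonality holds for $\psi-\bar\psi_h$ and the defect $\bar\psi_h-\psi_h$ is bounded separately by $O(C_\nu h^{r-1})$, whereas you fold both pieces into a single Strang-type estimate and carry the $O(C_\nu h^{r-1})$ consistency term through the duality step; the two organisations are equivalent and yield the same exponent $t=\min\{r-1,\,r(s,s+\tfrac12)+r(s,s-\tfrac12)\}$.
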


\begin{proof} We use the decomposition
\be
\phi-\phi_h=E\psi-E_h\psi_h=E(\psi-\psi_h) + (E-E_h)\psi_h,
\label{dec}
\ee
The second term can be estimated with the help of Lemma \ref{lemmaEEh} applied to
$g_h=\psi_h$ which gives
$$
\|(E-E_h)\psi_h\|_{H^1(\Omega)} \leq C_2h^{r-1} \|\psi_h\|_{H^s(\Gamma)}\leq C_2D_EC_{\nu} h^{r-1},
$$
from the a priori estimate \iref{appsih} for  $\psi_h$. We thus have obtained a bound in $\cO(h^{r-1})$ for the $H^1$ norm of the second term in \iref{dec}. 

For the first term, we know that
$$
\|E(\psi-\psi_h)\|_{H^1(\Omega)}\leq C_E\|\psi-\psi_h\|_{H^{1/2}(\Gamma)},
$$ 
and so we are led to estimate $\psi-\psi_h$ in the $H^{1/2}(\Gamma)$ norm. For this purpose, we introduce the intermediate 
 solution $\o \psi_h\in {\mathbb T}_h$ to the problem 
$$
\<\o \psi_h,g_h\>_{H^s(\Gamma)}=\mu(g_h)=\nu(Eg_h), \quad g_h\in {\mathbb T}_h,
$$
and we use the decomposition 
\be
\psi-\psi_h=(\psi-\o \psi_h) +(\o\psi_h-\psi_h).
\label{dec2}
\ee
We estimate the second term in \eref{dec2} by noting that for any $g_h\in {\mathbb T}_h$,
$$
\<\o \psi_h-\psi_h,g_h\>_{H^s(\Gamma)}=\nu((E-E_h)g_h) \leq C_\nu \|(E-E_h)g_h\|_{H^1(\Omega)}
\leq C_{\nu}C_2h^{r-1} \|g_h\|_{H^s(\Gamma)},
$$
where we have again used Lemma \ref{lemmaEEh}.
Taking $g_h=\o \psi_h-\psi_h$ we obtain a bound  $\cO(h^{r-1})$ for its $H^s(\Gamma)$ norm, and in turn
for its $H^{1/2}(\Gamma)$ norm.

It remains to estimate $\|\psi-\o \psi_h\|_{H^{1/2}(\Gamma)}$. 
Note that $\o \psi_h$ is exactly the Galerkin 
approximation of $\psi$ since we use the same linear form $\mu$ in both problems. In fact, we have
$$
\<\psi-\o \psi_h,g_h\>_{H^{s}(\Gamma)}=0, \quad g_h\in {\mathbb T}_h,
$$
that is $\o \psi_h$ is the $H^{s}$-orthogonal projection of $\psi$ onto ${\mathbb T}_h$
and therefore
$$
\|\psi-\o \psi_h\|_{H^{s}(\Gamma)}=\min_{\kappa_h\in {\mathbb T}_h} \|\psi-\kappa_h\|_{H^s(\Gamma)}.
$$
Since the linear form $\mu$ satisfies
$$
|\mu(g)|=|\nu(Eg)|\leq C_\nu \|Eg\|_{H^1(\Omega)}\leq C_\nu C_E \|g\|_{H^{1/2}(\Gamma)},
$$
and thus belongs to $H^{-1/2}(\Gamma)$,
we may apply the estimate \iref{approxetas} to $\gamma=\nu$, $\kappa=\psi$, $\delta=s-\frac 1 2>0$, to
reach 
\be  
\label{firstbound}
\|\psi-\o \psi_h\|_{H^{1/2}(\Gamma)}\le \|\psi-\o \psi_h\|_{H^{s}(\Gamma)} \leq CC_\nu C_Eh^{r(s,s-\frac 1 2)}.
\ee  
This proves the theorem for the value
$t=\min\{r-1,r(s,s-\frac 1 2)\}>0$.

We finally improve the value of $t$ by using  
a standard Aubin-Nitsche duality argument as follows.
We now take $\kappa$ to be the solution of \iref{hsvargamma} with 
$$
\gamma(v) = \langle \psi-\o\psi_h, v \rangle_{H^{1/2}(\Gamma)},\quad v\in H^{1/2}(\Gamma),
$$ 
where $\langle .,.\rangle_{H^{1/2}(\Gamma)}$ stands for the $H^{1/2}$ scalar product associated with the norm $\| . \|_{H^{1/2}(\Gamma)}$.
We then write
$$
\|\psi-\o\psi_h\|_{H^{1/2}(\Gamma)}^2=\<\psi-\o\psi_h,\psi-\o\psi_h\>_{H^{1/2}(\Gamma)}
=\<\kappa,\psi-\o\psi_h\>_{H^s(\Gamma)}=\<\kappa-\kappa_h,\psi-\o\psi_h\>_{H^s(\Gamma)},
$$
where the last equality comes from Galerkin orthogonality. It follows that 
$$
\|\psi-\o\psi_h\|_{H^{1/2}(\Gamma)}^2\leq \|\kappa-\kappa_h\|_{H^s(\Gamma)}\|\psi-\o\psi_h\|_{H^s(\Gamma)}
\leq Ch^{r(s,s+\frac 1 2)}\|\psi-\o\psi_h\|_{H^{1/2}(\Gamma)} \|\psi-\o\psi_h\|_{H^s(\Gamma)},
$$
where we have again used \iref{approxetas} now with $\delta=s+\frac 1 2$.
Using the already established estimate  \eqref{firstbound}, it follows that
$$
\|\psi-\o\psi_h\|_{H^{1/2}(\Gamma)} \leq CC_EC_\nu h^{\tilde t},
$$
with $\tilde t := r(s,s+ \frac 12)+r(s,s-\frac 12)$. 
With all such estimates, the desired convergence bound follows with
$t:=\min\{r-1, \tilde t\}$.
\end{proof}

\begin{remark}\label{rem:rate}
In the case of a polygonal domain and $s=1$ which is further considered in our numerical experiments, we know that
$r=\frac 3 2$ and $r(1,1)=1$ so that $\tilde t\geq r(1,\frac 3 2)\geq  1$. In turn the convergence bound is established with $t=r-1=\frac 1 2$. 
\end{remark}


 \subsection {The case of point value evaluations}
 \label{SS:pointvalues}

We discuss now the case where
$$
\nu(v)= \delta_z(v)=v(z),
$$
for some $z\in \o\Omega$. In order to guarantee that point evaluation is a continuous functional on $\cH^s$, we assume that 
$$
s> \frac{d-1} 2,
$$
that is $s> \frac 12$ for $d=2$, and  $s>1$ for $d=3$. We want to find the Riesz representer of such a point evaluation functional on $\cH^s$. Note that our assumption on $s$ ensures
the continuous embeddings 
$$
H^s(\Gamma)\subset C(\Gamma),
$$
as well as 
$$
\cH^s(\Omega)\subset H^r(\Omega)\subset C(\o\Omega),
$$
since in view of \eqref{definer}
$$
r=\min\Big\{s+\frac 1 2,r^*\Big\} >\frac d 2,
$$
where in the inequality we recall that $r^* > \frac 3 2$ for polygonal domains. 

 The point evaluation functional $\nu$ is thus continuous on $\cH^s(\Omega)$
with norm $C_s$ bounded independently of the position of $z$.
Of course, the Galerkin scheme analyzed above for $\nu\in 
H^1(\Omega)^*$ continues to make sense since $\nu$ is well defined
on the space ${\mathbb V}_h$.  

As explained in \S\ref{SS:point}, the prescriptions in Step 2 of the
recovery algorithm need to be strengthened in the point evaluation setting.
Thus, we are interested in bounding the pointwise error
$|\phi(x)-\phi_h(x)|$ at the measurement points,
in addition to the $H^1$-error $\|\phi-\phi_h\|_{H^{1}(\Omega)}$. In what follows, 
we establish a modified version of Theorem \ref{theorate} 
in the point value setting that gives a convergence rate for $\|\phi-\phi_h\|_{H^{1}(\Omega)}$,
and in addition for $\|\phi-\phi_h\|_{L_\infty(\Omega)}$
ensuring the pointwise error control. We stress that the numerical method remains
unchanged, that is, $\phi_h$ is defined in the exact same way as previously.
The new ingredients that are needed in our investigation are two classical results
on the behavior of the finite element method with respect to the $L_\infty$ norm.

The first one is the so-called weak discrete maximum principle which states that there exists
a constant $C_{\max}$ such that, for all $h>0$,
\be
\|E_h g_h\|_{L_\infty(\Omega)}\le C_{\max} \|g_h\|_{L_\infty(\Gamma)}, \quad g_h\in {\mathbb T}_h.
\label{discmax}
\ee
This result was first established in \cite{CR} with constant $C_{\max}=1$ for piecewise linear Lagrange finite elements
under acuteness assumptions on the angles of the simplices. The above  version with $C_{\max}\geq 1$ is established
in \cite{Schatz} for Lagrange finite elements of any degree on $2d$ polygonal domains, 
under the more general assumption that the meshes $\{\cT_h\}_{h>0}$
are quasi-uniform (in addition to shape regularity, all elements of $\cT_h$ have diameters of order $h$).
A similar result is established in \cite{LL} on $3d$ convex polyhedrons.

The second ingredient we need is a stability property in the $L_\infty$ norm of the Galerkin projection
$R_h: H^1_0(\Omega)\to {\mathbb W}_h$ where $R_hv$,  $v\in H_0^1(\Omega)$,  is defined by
$$
\int_\Omega \nabla R_hv \cdot\nabla v_h=\int_\Omega \nabla v \cdot\nabla v_h, \quad v_h\in {\mathbb W}_h.
$$
Specifically, this result states that there exists a constant $C_{\rm gal}$ and exponent $a\geq 0$ such that,
for all $h>0$,
\be
\|R_hv\|_{L_\infty(\Omega)}\leq C_{\rm gal} (1+|\ln(h)|)^{a} \|v\|_{L_\infty(\Omega)}, \quad v\in L_\infty(\Omega)\cap H_0^1(\Omega),
\label{stabritz}
\ee
that is, the Ritz projection is stable and quasi-optimal, uniformly in $h$, up to a logarithmic factor. 
This result is established in \cite{Schatz} for Lagrange finite elements on $2d$ polygonal domains
and quasi-uniform partitions, with $a=1$ in the case of piecewise linear elements and $a=0$ for higher order elements.
A similar result is established in \cite{LL} with $a=0$ for convex polygons and polyhedrons.
In going further, we assume that the choice of finite element meshes ensures 
the validity of \iref{discmax} and \iref{stabritz}.

We begin our analysis with the observation that under the additional mesh assumptions, Lemma \ref{lemmaEEh} can be 
adapted to obtain an estimate on $\|(E-E_h)g_h\|_{L_\infty(\Omega)}$. 

\begin{lemma}
\label{lemmaEEhinf}
For any $g_h\in {\mathbb T}_h$, one has
\be
\|(E-E_h)g_h\|_{L_\infty(\Omega)} \leq C_3(1+|\ln(h)|)^{a})h^{r-\frac d 2} \|g_h\|_{H^s(\Gamma)},
\label{EEhinf}
\ee
where $C_3$ 
depends on $(r,s)$, the geometry of $\Omega$, and the family of meshes through $C_{\rm gal}$.
\end{lemma}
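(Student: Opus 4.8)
The plan is to follow the proof of Lemma \ref{lemmaEEh} step by step, but carry out the estimates in the $L_\infty$ norm, the decisive new ingredient being the logarithmically quasi-optimal $L_\infty$-stability \iref{stabritz} of the Ritz projection $R_h:H^1_0(\Omega)\to {\mathbb W}_h$. Set $w:=(E-E_h)g_h$. Since $T(Eg_h)=T(E_hg_h)=g_h$, we have $w\in H^1_0(\Omega)$, and combining the harmonicity of $Eg_h$ (tested against functions of ${\mathbb W}_h\subset H^1_0(\Omega)$) with the discrete harmonicity of $E_hg_h$ yields the Galerkin orthogonality $\int_\Omega\nabla w\cdot\nabla v_h=0$ for all $v_h\in {\mathbb W}_h$.

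Next I would introduce an arbitrary $\chi_h\in {\mathbb V}_h$ with $T(\chi_h)=g_h$ and prove the identity $w=(I-R_h)(Eg_h-\chi_h)$. Indeed, $e:=Eg_h-\chi_h\in H^1_0(\Omega)$, and for every $v_h\in {\mathbb W}_h$ both $\int_\Omega\nabla(E_hg_h-\chi_h)\cdot\nabla v_h$ and $\int_\Omega\nabla e\cdot\nabla v_h$ equal $-\int_\Omega\nabla\chi_h\cdot\nabla v_h$ (using respectively the discrete harmonicity of $E_hg_h$ and the harmonicity of $Eg_h$); since $E_hg_h-\chi_h\in {\mathbb W}_h$, uniqueness of the Ritz projection gives $R_he=E_hg_h-\chi_h$, and hence $w=e-R_he$. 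Using $R_hv_h=v_h$ for $v_h\in {\mathbb W}_h$ together with \iref{stabritz}, this produces
$$
\|w\|_{L_\infty(\Omega)}\le\big(1+C_{\rm gal}(1+|\ln h|)^{a}\big)\inf_{\psi_h\in {\mathbb V}_h,\ T(\psi_h)=g_h}\|Eg_h-\psi_h\|_{L_\infty(\Omega)}.
$$

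It then remains to estimate the infimum. Here I would use that in the point-value regime $s>\frac{d-1}{2}$ treated in this subsection one has $r>\frac d2$, so that $Eg_h\in H^r(\Omega)\hookrightarrow C(\o\Omega)$ and standard $L_\infty$ approximation estimates apply: choosing $\psi_h$ to be a quasi-interpolant of $Eg_h$ (for instance the nodal Lagrange interpolant, or a Scott-Zhang type operator) that reproduces the discrete boundary data, one gets $\|Eg_h-\psi_h\|_{L_\infty(\Omega)}\le Ch^{r-d/2}\|Eg_h\|_{H^r(\Omega)}$; the boundary trace is reproduced because the boundary nodal values of $Eg_h$ coincide with those of $g_h\in {\mathbb T}_h$, so $T(\psi_h)=g_h$ and $\psi_h$ is admissible in the infimum. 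Inserting $\|Eg_h\|_{H^r(\Omega)}\le C_1\|g_h\|_{H^s(\Gamma)}$ from \iref{Ers} and absorbing $1+C_{\rm gal}(1+|\ln h|)^{a}\le C(1+|\ln h|)^{a}$ then delivers \iref{EEhinf}, with $C_3$ depending on $(r,s)$, the geometry of $\Omega$, and $C_{\rm gal}$ as claimed.

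I do not expect a genuine obstacle once \iref{stabritz} is available: the whole argument rests on the clean identity $w=(I-R_h)(Eg_h-\chi_h)$, which reduces the bound to the $L_\infty$ quasi-optimality of the Galerkin projection, in exact parallel with how the proof of Lemma \ref{lemmaEEh} reduced the $H^1$ bound to $H^1$ quasi-optimality. The only points needing a little care are (i) verifying that the chosen interpolant preserves the discrete trace, and (ii) making sure we are in the range $r>d/2$ so that pointwise interpolation estimates are meaningful — both are guaranteed here, the latter by the standing hypothesis $s>\frac{d-1}{2}$. Note in particular that the weak discrete maximum principle \iref{discmax} is not needed for this lemma; it will enter only in the pointwise error analysis of $\phi-\phi_h$ itself.
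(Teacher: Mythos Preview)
Your argument is correct and follows essentially the same route as the paper: the identity $w=(I-R_h)(Eg_h-\chi_h)$ is exactly the paper's observation $E_hg_h-v_h=R_h(Eg_h-v_h)$, and both then invoke \iref{stabritz} to reduce to a best $L_\infty$ approximation of $Eg_h$ under the trace constraint, estimated via $H^r(\Omega)\hookrightarrow C^{r-d/2}(\overline\Omega)$. Your remark that \iref{discmax} is not needed for this lemma is also accurate.
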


\noindent
\begin{proof} 
For
any $v_h\in {\mathbb V}_h$ such that $T(v_h)=g_h$, we write
$$
\|(E-E_h)g_h\|_{L_\infty(\Omega)}\leq \|Eg_h-v_h\|_{L_\infty(\Omega)}+\|E_hg_h-v_h\|_{L_\infty(\Omega)}.
$$
It is readily seen that $E_hg_h-v_h=R_h(E_hg_h-v_h)=R_h(Eg_h-v_h)$. 
Indeed $R_hE_hg_h- R_hEg_h  \in \mathbb{W}_h$ and $\int_\Omega \nabla (R_h(E_hg_h- Eg_h))\cdot \nabla v_h=
\int_\Omega \nabla  (E_hg_h- Eg_h)\cdot \nabla v_h =0$ for all $v_h\in \mathbb{W}_h$. Therefore, by \iref{stabritz}, we obtain
$$
\|(E-E_h)g_h\|_{L_\infty(\Omega)}
\leq 
(1+C_{\rm gal} (1+|\ln(h)|)^{a}) \min_{v_h\in {\mathbb V}_h,T(v_h)=g_h}\|Eg_h-v_h\|_{L_\infty(\Omega)}.
$$
On the other hand, we are ensured that $Eg_h$ belongs to $H^r(\Omega)$ where $r>\frac d2$,
and therefore has H\"older smoothness of order $r-\frac d 2>0$ with
$$
\|Eg_h\|_{C^{r-\frac d 2}(\Omega)}\leq C_e\|Eg_h\|_{H^r(\Omega)} \leq C_eC_1\|g_h\|_{H^s(\Gamma)},
$$
where $C_e$ is the relevant continuous embedding constant. By standard finite element approximation theory,
$$
\min_{v_h\in {\mathbb V}_h,T(v_h)=g_h}\|Eg_h-v_h\|_{L_\infty(\Omega)} \leq C h^{r-\frac d 2} \|Eg_h\|_{C^{r-\frac d 2}(\Omega)},
$$
where $C$ depends on $r$ and the shape-parameter $\sigma$ and therefore we obtain \iref{EEhinf}. 
\end{proof}

We are now in position to give an adaptation of Theorem \ref{theorate} to the point value setting.

\begin{theorem}
\label{theorateinf}
Under {\em \bf Assumption R}, for 
any $t_1<\min\{r-\frac d 2,r(s,s+ \frac 12)+r(s,s-\frac 12)\}$, one has
\be
\label{convphibound2}
\|\phi-\phi_h\|_{H^{1}(\Omega)} \leq Ch^{t_1},
\ee
and for any $t_2<\min\{r-\frac d 2,2r(s,s-\frac {d-1} 2)\}$, one has
\be
\label{convphiboundinf}
\|\phi-\phi_h\|_{L_\infty(\Omega)}\leq Ch^{t_2}.
\ee
The constant $C$ depends in both cases on $s$, $t_1$ and $t_2$, on the geometry of $\Omega$,
as well as on the family of meshes through the constants $C_{\max}$ and $C_{{\rm gal}}$, and the shape parameter $\sigma$.
\end{theorem}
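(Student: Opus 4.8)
The plan is to replay the proof of Theorem~\ref{theorate} verbatim in its skeleton, replacing every ingredient that exploited the $H^1(\Omega)$-continuity of $\nu$ (which now fails) by an $L_\infty(\Omega)$-counterpart. The scheme itself is literally unchanged, so no new well-posedness has to be established. The first preparatory step is a uniform a priori bound $\|\psi_h\|_{H^s(\Gamma)}\le C$, replacing \eref{appsih}: testing \eref{galer} with $g_h=\psi_h$ gives $\|\psi_h\|_{H^s(\Gamma)}^2=\nu(E_h\psi_h)=(E_h\psi_h)(z)\le \|E_h\psi_h\|_{L_\infty(\Omega)}$, and then the weak discrete maximum principle \eref{discmax} together with the embedding $H^s(\Gamma)\subset C(\Gamma)$ (valid since $s>\frac{d-1}2$) yields $\|E_h\psi_h\|_{L_\infty(\Omega)}\le C_{\max}\|\psi_h\|_{L_\infty(\Gamma)}\le C\|\psi_h\|_{H^s(\Gamma)}$, whence the claim.

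For both estimates I use the decomposition \eref{dec}, $\phi-\phi_h=E(\psi-\psi_h)+(E-E_h)\psi_h$, and split $\psi-\psi_h$ as in \eref{dec2} through the intermediate Galerkin iterate $\o\psi_h\in\Tr$ solving $\<\o\psi_h,g_h\>_{H^s(\Gamma)}=\mu(g_h)=\nu(Eg_h)$. The term $(E-E_h)\psi_h$ is controlled in $H^1(\Omega)$ by Lemma~\ref{lemmaEEh} (rate $h^{r-1}$, never the bottleneck since $r-1\ge r-\frac d2$) and in $L_\infty(\Omega)$ by Lemma~\ref{lemmaEEhinf} (rate $(1+|\ln h|)^a h^{r-\frac d2}$), in both cases multiplied by the uniform bound on $\|\psi_h\|_{H^s(\Gamma)}$. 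For $\o\psi_h-\psi_h$, testing the two defining equations against $g_h\in\Tr$ gives $\<\o\psi_h-\psi_h,g_h\>_{H^s(\Gamma)}=\nu((E-E_h)g_h)=((E-E_h)g_h)(z)\le\|(E-E_h)g_h\|_{L_\infty(\Omega)}$; this is exactly where Lemma~\ref{lemmaEEhinf} must replace Lemma~\ref{lemmaEEh}. Taking $g_h=\o\psi_h-\psi_h$ produces $\|\o\psi_h-\psi_h\|_{H^s(\Gamma)}\le C(1+|\ln h|)^a h^{r-\frac d2}$, which after applying $E$ (and \eref{stabE}, resp.\ the embedding $\cH^\sigma(\Omega)\subset C(\overline\Omega)$ for $\sigma>\frac{d-1}2$) contributes the $h^{r-\frac d2}$ term to both bounds.

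It remains to estimate the projection error $\psi-\o\psi_h$, which does not involve $E_h$. Here I note that $\mu=\nu\circ E$ satisfies $|\mu(g)|=|(Eg)(z)|\le\|Eg\|_{L_\infty(\Omega)}\le C\|g\|_{H^{\frac{d-1}2+\delta}(\Gamma)}$ for every $\delta>0$, so $\mu\in H^{-s+\delta'}(\Gamma)$ with $\delta'=s-\frac{d-1}2-\delta>0$, and Assumption~R gives the primal bound $\|\psi-\o\psi_h\|_{H^s(\Gamma)}\le Ch^{r(s,\,s-\frac{d-1}2-\delta)}$. I then run the Aubin--Nitsche duality argument of Theorem~\ref{theorate} twice. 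Once with the $H^{1/2}(\Gamma)$ scalar product as target, needed for the $H^1(\Omega)$-bound via $\|E(\psi-\o\psi_h)\|_{H^1(\Omega)}\le C_E\|\psi-\o\psi_h\|_{H^{1/2}(\Gamma)}$: the dual problem is unchanged from Theorem~\ref{theorate}, only the primal input loses the harmless $\delta$, so one reproduces the exponent $r(s,s+\frac12)+r(s,s-\frac12)$. And once with the $H^\sigma(\Gamma)$ scalar product, $\sigma=\frac{d-1}2+\delta$, as target, needed for the $L_\infty(\Omega)$-bound via $\cH^\sigma(\Omega)\subset L_\infty(\Omega)$: here both the primal and the crude Riesz-based dual estimate contribute $r(s,\,s-\frac{d-1}2-\delta)$, giving $\|\psi-\o\psi_h\|_{H^\sigma(\Gamma)}\le Ch^{2r(s,\,s-\frac{d-1}2-\delta)}$. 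Collecting the three contributions in each norm and letting $\delta\downarrow0$ yields \eref{convphibound2} and \eref{convphiboundinf}.

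The main obstacle is the bookkeeping rather than any single hard estimate: every occurrence of ``$\nu$ applied to a finite element error'' now has to be routed through $L_\infty(\Omega)$, hence through Lemma~\ref{lemmaEEhinf}, the discrete maximum principle \eref{discmax}, and the Ritz-stability bound \eref{stabritz}, and one must carefully track the $\delta$-losses forced by the fact that the embeddings $H^\sigma(\Gamma)\subset C(\Gamma)$ and $\cH^\sigma(\Omega)\subset C(\overline\Omega)$ hold only for $\sigma>\frac{d-1}2$. Together with the (assumed) non-decreasing dependence of $r(s,\cdot)$ in Assumption~R, this is precisely what makes $\sup_{\delta>0}r(s,s-\frac{d-1}2-\delta)$ play the role of $r(s,s-\frac{d-1}2)$ and forces $t_1,t_2$ to appear as strict inequalities, also absorbing the logarithmic factors.
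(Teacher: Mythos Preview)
Your approach is essentially the same as the paper's: the same decomposition \eref{dec}--\eref{dec2}, the same a priori bound on $\|\psi_h\|_{H^s(\Gamma)}$ via the weak discrete maximum principle, the same use of Lemma~\ref{lemmaEEhinf} in place of Lemma~\ref{lemmaEEh} wherever $\nu$ is applied to a finite element error, and the same Aubin--Nitsche double-dipping for $\psi-\o\psi_h$. The only cosmetic difference is that for the $L_\infty$-control of $E(\psi-\psi_h)$ you route through the embedding $\cH^\sigma(\Omega)\subset C(\overline\Omega)$ with $\sigma>\tfrac{d-1}2$, whereas the paper first applies the continuous maximum principle $\|E(\psi-\psi_h)\|_{L_\infty(\Omega)}\le\|\psi-\psi_h\|_{L_\infty(\Gamma)}$ and then the boundary embedding $H^{\o s}(\Gamma)\subset C(\Gamma)$; both routes yield the same rate.
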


\begin{proof}
We estimate $\|\phi-\phi_h\|_{H^{1}(\Omega)}$ by adapting certain steps in the proof of Theorem \ref{theorate}.
The first change lies in the a priori estimate of the $H^s(\Gamma)$ norm
of $\psi_h$ that was previously given by \iref{appsih} which is not valid anymore since $C_\nu=\infty$. Instead, we write
$$
\|\psi_h\|^2_{H^s(\Gamma)}=\<\psi_h,\psi_h\>_{H^s(\Gamma)}=\nu(E_h\psi_h) 
\leq \|E_h\psi_h\|_{L_\infty(\Omega)}
\leq C_{\max}\|\psi_h\|_{L_\infty(\Gamma)}
\leq C_{\max}B_s\|\psi_h\|_{H^{s}(\Gamma)},
$$
where we have used \iref{discmax} and where $B_s$ is the continuous embedding constant
between $H^{s}(\Gamma)$ and $L_\infty(\Gamma)$. In turn, we find that
\be
\|\psi_h\|_{H^s(\Gamma)} \leq C_{\max}B_s,
\label{appsih2}
\ee
which results in the slightly modified estimate
$$
\|(E-E_h)\psi_h\|_{H^1(\Omega)} \leq C_2C_{\max}B_s h^{r-1},
$$
for the second term of \iref{dec}.

For the  first term $E(\psi-\psi_h)$, we proceed in a similar manner to the proof of Theorem \ref{theorate}.  Namely, we 
estimate the $H^{1/2}(\Gamma)$ norms of two summands in \iref{dec2}. The estimate
of $\|\o \psi_h-\psi_h\|_{H^{1/2}(\Gamma)}$ is modified as follows. We note that for any $g_h\in {\mathbb T}_h$,
$$
\<\o \psi_h-\psi_h,g_h\>_{H^s(\Gamma)}=\nu((E-E_h)g_h) \leq
\|(E-E_h)g_h\|_{L_\infty(\Omega)}
\leq  C_3(1+|\ln(h)|)^{a})h^{r-\frac d 2} \|g_h\|_{H^s(\Gamma)},
$$
where we have now used Lemma \ref{lemmaEEhinf}.
Taking $g_h=\o \psi_h-\psi_h$ we obtain a bound of
order $\cO(h^{r-\frac d 2})$ up to logarithmic factors 
for its $H^s$ norm, and in turn for its $H^{1/2}$ norm.
The estimate of $\|\psi-\o \psi_h\|_{H^{1/2}(\Gamma)}$ is left unchanged and of order $\cO(h^{\tilde t})$.
Combining these various estimates, we have established \iref{convphibound2} for any $t_1<\min\{r-\frac d 2,\tilde t\}$,
with $\tilde t := r(s,s+ \frac 12)+r(s,s-\frac 12)$. 

We next estimate $\|\phi-\phi_h\|_{L_\infty(\Omega)}$ by
the following adaptation of the proof of Theorem \ref{theorate}. For the first term $(E-E_h)\psi_h$ of 
\iref{dec} we use Lemma \ref{lemmaEEhinf} combined with the estimate \iref{appsih2} of $\psi_h$
which give us
$$
\|(E-E_h)\psi_h\|_{L_\infty(\Omega)} \leq C_{\max}B_s C_3(1+|\ln(h)|)^{a})h^{r-\frac d 2}.
$$
For the second term $E(\psi-\psi_h)$, we use the continuous maximum principle to obtain
$$
\|E(\psi-\psi_h)\|_{L_\infty(\Omega)}
\leq \|\psi-\psi_h\|_{L_\infty(\Gamma)}
\leq \|\psi_h-\o \psi_h\|_{L_\infty(\Gamma)}+\|\psi-\o \psi_h\|_{L_\infty(\Gamma)}
$$
For the first summand, we write
$$
\|\psi_h-\o \psi_h\|_{L_\infty(\Gamma)}\leq C_e\|\psi_h-\o \psi_h\|_{H^s(\Gamma)},
$$
where $C_e$ is the relevant continuous embedding constant, and we have already observed
that $\|\psi_h-\o \psi_h\|_{H^s(\Gamma)}$ satisfies a bound in $\cO(h^{r-\frac d 2})$ up to logarithmic factors.
For the second summand, we may write 
$$
\|\psi-\o \psi_h\|_{L_\infty(\Gamma)}\leq C_e\|\psi-\o \psi_h\|_{H^{s}(\Gamma)},
$$
where $C_e$ is the relevant continuous embedding constant. 
Since  $\nu$ belongs to $H^{-s+\delta}(\Gamma)$ for all $\delta<s-\frac {d-1} 2$,
we can apply the estimate \iref{approxetas} to
reach a convergence bound
$$
\|\psi-\o \psi_h\|_{H^{s}(\Gamma)} \leq Ch^{r(s,\delta)},
$$
where $C$ depends on the closeness of $\delta$ to $s-\frac {d-1} 2$, and on the family of meshes through the 
shape parameter $\sigma$. Combining these estimates then gives \iref{convphiboundinf} for any $t_2<\min\{r-\frac d 2,
\tilde t\}$ where $\tilde t=r(s,s-\frac{d-1}{2})$, since $\delta$ can be picked arbitrarily close to $s-\frac {d-1} 2$.

 We can improve the range of $t_2$ as follows: pick any $\o s$ such that $\frac {d-1} 2<\o s< s$
and write
$$
\|\psi-\o \psi_h\|_{L_\infty(\Gamma)}\leq C_e\|\psi-\o \psi_h\|_{H^{\o s}(\Gamma)},
$$
where $C_e$ is the relevant continuous embedding constant. We then apply a similar 
Aubin-Nitsche argument to derive an estimate 
$$
\|\psi-\o\psi_h\|_{H^{\o s}(\Gamma)} \leq C h^{r(s,\delta)+r(s,s-\o s)}.
$$
Combining these estimates gives \iref{convphiboundinf} for any $t_2<\min\{r-\frac d 2,\o t\}$,
where $\o t:=2r(s,s-\frac{d-1}{2})$ since $\o s$ can be picked arbitrarily close to $\frac {d-1} 2$ and $\delta$ arbitrarily close to $s-\frac {d-1} 2$.

\end{proof}

 \section{Numerical Illustrations}
 \label{SS:Rr}
  
  In this section, we implement some examples of our numerical method.  For this, we have
  to specify the domain $\Omega$, the functionals $\lambda_j$, and a function
  $u\in H^1(\Omega)$ which    gives rise to the data vector $w=\lambda(u)$.  While our numerical
  method can be applied to general choices for these quantities, in our illustrations we  make these choices
    so that   the computations are not too involved but yet allow us the flexibility to illustrate certain features of our algorithm.
  The specific choices we make for our numerical example are the following.\\
  
  \noindent{\bf The domain:}  
  In order to simplify the presentation, we restrict ourselves when $\Omega = (0,1)^2$   
  but point out again that the algorithm can be extended to more general domains. 
  \vskip .1in
  \noindent{\bf The function $u$:} For the function $u$ we choose 
     the harmonic function $u=u_\cH$ where
\be 
\label{defv}
u_\cH(x,y) = e^x \cos(y), \qquad (x,y) \in \Omega:= (0,1)^2.
\ee 
This choice means that $u_0=0$ and therefore allows us not to deal with the computation of $\hat u_0$.  This choice 
corresponds to the right side $f=0$. Note that the  trace of $u_\cH$ on the boundary $\Gamma$  
is piecewise smooth and continuous. Therefore, we have $T(u_\cH) \in H^1(\Gamma)$.  
We take $s=1$ as our assumption on the value of $s$.  This means that we
shall seek Riesz representor for the functionals given below when viewed as acting on $\cH^1(\Omega)$.
  
  \subsection{The case of linear functionals   defined on $H^1(\Omega)$}
  \label{SS:lfH1}
   In this section, we consider numerical experiments for linear functionals defined on $H^1(\Omega)$. In our illustrative example, we relabel
   these functionals by double indices associated with a regular square grid. More precisely,
\be
\label{lij}
\lambda_{i,j}(v) := \frac{1}{\sqrt{2\pi r^2}} \int_{\Omega} v(z) e^{-\frac{1}2 \frac{| z-z_{i,j}|^2}{r^2}}dz, \ v\in H^1(\Omega),\quad i,j=1,...,\sqrt{m}.
\ee
Here,
we assume  that $m$ is a square integer and $r=0.1$ in our simulations.
The centers $z_{i,j} \in \Omega$ are uniformly distributed
\be 
\label{defzij}
z_{i,j} := \frac 1 {\sqrt{m}+1} (i,j), \qquad i,j=1,...,\sqrt{m}.
\ee

Recall that our numerical algorithm as described in Section~\ref{ss:numerical_algorithm} is based on finite element methods.
 Specifically, we use the finite element spaces 
$$
\mathbb V_h:= \left\lbrace v_h \in C^0(\overline \Omega) \ : \ v_h |_T \in \mathcal Q^1, \qquad T \in \mathcal T_h\right\rbrace, 
$$
where $\mathcal T_h$ are subdivisions of $\Omega$ made of squares of equal side length $h$ and $\mathcal Q^1$ 
denotes the space of polynomials of degree at most 1 in each direction. In order to study the effect of the mesh-size we specifically
consider
$$
h=h_n:=2^{-n}, \quad n=4,\dots,9,
$$
that is, bilinear elements on uniformly refined meshes with mesh-size $2^{-n}$.

  We display in Table~\ref{t:recovery_error} the results of our numerical recovery algorithm.  
  The entries in the table are the recovery errors 
  $$
  e(m,n):=\| u_\cH - \hat u_\cH \|_{H^1(\Omega)},
  $$
  where $\hat u_\cH\in \mathbb V_{h_n} $ is the recovery for the particular values of $m$ and $n$.

\begin{table}[ht!]
    \centering
    \begin{tabular}{c||c| c| c| c| c }
    \diagbox{n}{m}     &  4 & 9 & 16 & 25 & 36 \\ \hhline{|=||=|=|=|=|=|}  
    4   &   0.7  & 0.28  &  0.2 & 141.73  & 49.43 \\ \hline
    5    &  0.7  &0.28  & 0.18  & 16.0  & 16.31 \\ \hline
    6    &  0.7  & 0.28  & 0.18  & 0.2  & 1.79\\ \hline
    7    &  0.7  & 0.28  & 0.18  & 0.16   & 0.11  \\ \hline
    8    &  0.7  & 0.28  & 0.18  & 0.09   & 0.06  \\ \hline
    9    &  0.7  & 0.28  & 0.18  &0.09  & 0.06\\ \hline
    \end{tabular}
    \caption{Recovery error $e(m,n)$ for different amounts of Gaussian measurements $m$ and finite element refinements $n$. 
    }
    \label{t:recovery_error}
\end{table}

We have proven in this paper that our numerical recovery algorithm is near optimal with constant $C$ that can be made arbitrarily close to one by choosing $n$ sufficiently large.  This means that the error $e(m,n)$ satisfies  $e(m,n)\le CR(K^\cH_w)_{H^1(\Omega)}$ for $n$ sufficiently large.
Increasing the number $m$ of measurements is expected to decrease this Chebyshev radius.  While one is tempted to think
that the entries in each column of the table provides an upper bound for the Chebyshev radius of $K^\cH_w$ for these measurements,
this is not guaranteed since we are only measuring the error for one function from $K_w$, namely $u_\cH$, and not all possible functions from $K_w$.  However, the entries in any given column provide a lower bound for the Chebyshev radius of $K^\cH_w$ provided $n$ is sufficiently large.

 Increasing the number $m$ of measurements   requires a finer resolution, i.e., increasing $n$,  of the finite element discretization
until the perturbation $\e$ in Theorem \ref{T:H2} is sufficiently small. This is indeed confirmed by the results in Table \ref{t:recovery_error} where stagnating error bounds (in each fixed column) indicate the corresponding tip-over point. We notice in particular that for small values of $n$, the error becomes very large as
$m$ grows. This is explained by the fact that the Gramian matrix $G$ becomes severely ill-conditioned, and 
in turn the prescriptions on $\|G-\hat G\|_1$ cannot be fulfilled when using finite element approximation of the Riesz representers
on too  coarse meshes. An overall convergence of the recovery error to zero can, of course, only take
place when both $m$ and $n$ increase.

\subsection{The case of point value measurements} \label{ss:point_num}
In this section, we describe our numerical experiments in the case where the linear functionals $\lambda_{i,j}$ are point
evaluations at points from $\overline \Omega$.  Recall that while the $\lambda_{i,j}$ are not defined for general functions in
$H^1(\Omega)$ they are defined for functions in the model class $K^\cH:=U(\cH^s(\Omega))$ provided $s$ is sufficiently large
($s>1/2$ for $d=2$ and $s>1$ for $d=3$).  This means that the optimal recovery problem is well posed in such a case.
We have given in \S\ref{SS:point} sufficient conditions on a numerical algorithm to give near optimal recovery
and then we have gone on to show in \S\ref{SS:pointvalues} that our proposed numerical algorithm based on discrete harmonics
converges to a near optimal recovery with any constant $C>1$ provided that the finite element spaces are discretized fine enough.

In the numerical experiments of this section, we again take $\Omega=(0,1)^2$, $s=1$, and the data to be the point values of the harmonic function $u_\cH$ defined in \eref{defv}.  We choose the
evaluation points to be the
$z_{i,j}$ of \eref{defzij}.
We now provide in Table~\ref{t:recovery_error_p} the recovery error $e(m,n)$.
The observed behavior is similar to the case of Gaussian averages; see Table~\ref{t:recovery_error}. 

\begin{table}[ht!]
    \centering
    \begin{tabular}{c||c| c| c| c| c }
    \diagbox{n}{m}     &  4 & 9 & 16 & 25 & 36 \\ \hhline{|=||=|=|=|=|=|}  
    4   &   0.70  & 0.28  &  0.19 & 14.43  & 15.49 \\ \hline
    5    &  0.70 &  0.28  &  0.18 & 32.56 & 8.02\\ \hline
    6    &  0.70 & 0.28  & 0.18  & 1.51  &  2.27\\ \hline
    7    &  0.70 & 0.28 &  0.18  & 0.53  &  0.89 \\ \hline
    8    &  0.70  & 0.28  & 0.18 & 0.20   & 0.14\\ \hline
    9    &  0.70  & 0.28 & 0.18  & 0.14 &  0.11 \\ \hline
    \end{tabular}
    \caption{Recovery error $e(m,n)$ for different amounts of 
   point evaluation measurements $m$ and refinements $n$. 
     }
    \label{t:recovery_error_p}
\end{table}

\subsection{Additional comments on the approximation of Riesz representers}
\label{SS:addednumerical}
 
We provide a little more information on the computation of the Riesz representers that may be of interest to the reader.
We work in the same setting as  in the previous sections.
Let us begin with the rate of convergence of our numerical approximations to the Riesz representers.

  \begin{figure}[ht!]
\begin{center}
\input{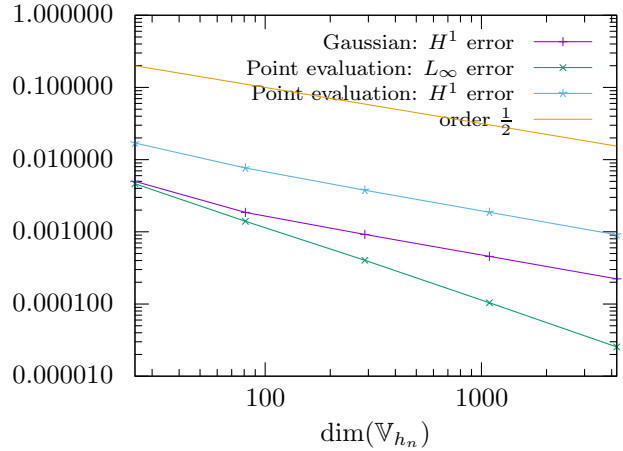}
\end{center}
\caption{Approximation errors for the Riesz representers of the Gaussian and point evaluation functionals.}
\label{f:conv}
\end{figure}

We first consider the computation of the Riesz representer for the Gaussian measurement functional centered at $z=z_{i,j}:=(0.75,0.5)$.  Let $\phi_n\in {\mathbb V}_{h_n}$ be the approximation to the Riesz representer $\phi$ produced by the finite element computation.  Figure~\ref{f:conv} shows the error $\|\phi_n-\phi_9\|_{H^1(\Omega)}$, $n=2,\dots,6$.  This graph indicates an error decay $Cn^{-1/2} = Ch_n$  (Theorem~\ref{theorate} only guarantees $Ch_n^{1/2}$, see also Remark~\ref{rem:rate}).

  Next consider the computation of the Riesz representer for point evaluation at the same $z$.  Figure~\ref{f:conv} reports the numerical computations of error in both the $H^1(\Omega)$ and $L_\infty(\Omega)$ norms. Again, the graph indicates an error decay $Ch_n$ for the $H^1(\Omega)$ norm and a decay rate closer to $Ch_n^2$ for the $L_\infty(\Omega)$ norm which are better than the rate guaranteed by Theorem~\ref{theorate} and Theorem~\ref{theorateinf}.

 \subsection{Convergence of the estimator}
\label{SS:addconverge}

We conclude the analysis of our algorithm with some remarks on the convergence of the estimator provided by our algorithm
as $m,n\to+\infty$.  We continue with  the case where $K=K^\cH=U(\cH^s(\Omega))$ with $s>1/2$ and $X$ is a Banach space for which $\cH^s(\Omega)$ embeds into $X$.
For each $m=1,2,\dots$, let 
$$\Lambda_m:=\{\lambda_{1,m},\dots,\lambda_{m,m}\}$$
be the set of measurement functionals. Recall that we are either  dealing with functionals in $H^1(\Omega)^*$ or with point evaluations which belong to $\cH^s(\Omega)^*$ by our assumption on $s$.
We assume that the data   vectors
\be 
\label{dataobservations}
w_m:= w_m(u):=(\lambda_{1,m}(u),\dots, \lambda_{m,m}(u)), \quad m=1,2,\dots,
\ee 
are observations of a fixed function $u\in K$.  
We are interested in what conditions on the sets $\Lambda_m$, $m=1,2,\dots$, ensure that the functions $\hat u_{m,n}$ produced by our algorithm converge in $\|\cdot\|_X$ to $u$ as $m,n\to\infty$
and whether this convergence is uniform over $u\in K$.

  Theorem \ref{T:H2} says that  
  \be 
  \label{be11}
  \|u-\hat u_{m,n}\|_X\le  R(K_{w_m(u)})_X+\e_{n,m},
  \ee 
  where for each fixed $m$, the error $\e_{n,m}\to 0$ as $n\to \infty$. 
  Therefore, if $R(K_{w_m(u)})_X\to 0$ as $m\to\infty$, then we know that given any error tolerance $\eta>0$,
  the error $\|u-\hat u_{m,n}\|_X$ will not exceed $\eta$ provided we take $m$ sufficiently large and then $n$ sufficiently large (depending on $m$).  Thus, we are left with the question of whether 
  $R(K_{w_m(u)})_X\to 0$, uniformly over $u\in K$, as the number $m$ of measurements   tends to $+\infty$.
In other words, we would like to know if 
\be 
\label{dense1}
R_m(K^\cH):=\max_{u\in K} R(K_{w_m(u)})_X\ \to 0,\quad m\to \infty.
\ee 
This is equivalent to  asking when does 
\be
\widetilde R_m(K^\cH):=\max \{\|v\|_{X} \; : \; v\in K^\cH \cap \cN_m\} \to 0,\quad{\rm as}\quad m\to \infty.
\label{dense}
\ee
where $\cN_m$ is the null space of $\lambda_{1,m},\dots,\lambda_{m,m}$.    Indeed, $ \widetilde R_m(K^\cH)\le 2R_m(K^\cH)\le 2\widetilde R_m(K^\cH)$. 
  Obviously, the validity of \eref{dense} requires some density assumption on the sets $\Lambda_m$ in $X^*$ as $m\to\infty$.   

We  illustrate how \eref{dense} is verified by considering   the two specific cases studied in our numerical experiments, i.e., the cases of Gaussian and pointwise
measurements on the Cartesian mesh points $z_{i,j}$ defined in \iref{defzij}.  
We   limit our discussion, as was done in our numerical examples, to the case when $X=H^1(\Omega)$, $\Omega=(0,1)^2$  and $K=K^\cH$, where $\cH=\cH^1(\Omega)$. It follows  that the functions in $K$ are in $U(H^{3/2}(\Omega))$ and hence (by the Sobolev embedding theorem) are not only continuous but in a ball of ${\rm Lip}~\alpha$ for each $\alpha<1/2$ with 
$$
\| f \|_{{\rm Lip}~\alpha}:= \max\left\{ \| f \|_{L_\infty(\Omega)}, \sup_{\substack{x,y\in \Omega \\ \ x \not = y}} \frac{| f(x)-f(y)|}{|x-y|^{\alpha}}\right\}.
$$
We recall that the measurement functionals in $\Lambda_m$  are associated with a grid  of points $z_{i,j}$ where $m=k^2$,    $k\geq 1$ an integer.  We denote by $\Omega_m=(h,1-h)\times (h,1-h)\subset \R^2$ the convex hull of these grid points.

Before going further, let us mention  the following remark, which we will use next.
\begin{remark}
\label{Rneeded}
Let   $R:=(a,b)\times(c,d)\subset \R^2$ be a rectangle in $\R^2$ and  $R_x:=R\setminus (b-h,b)\times[c,d]$. Let $g \in U(H^{1/2}(R))$ such that $\|g\|_{L_2(R_x)}\leq \varepsilon$. 
Note that  $\|g\|_{B^{1/2}_\infty(L_2(R))}\le \|g\|_{H^{1/2}(R)}\le 1$ due to the embedding of these spaces, and thus if we define $g_h:R_x\to\R$ by $g_h(x,y):=g(x+h,y)$, we have
$$
\|g_h-g\|_{L_2(R_x)} \leq   h^{1/2}.
$$ 
Then, the following bound on the norm of $g$ on the whole rectangle $R$ holds,
\begin{equation}\label{e:extension}
\|g\|_{L_2(R)}\leq \sqrt{2}(\varepsilon+h^{1/2}),
\end{equation}
since
\begin{eqnarray}
    \nonumber
\|g\|^2_{L_2(R)}&\leq&
\|g\|^2_{L_2(R_x)}+
\|g_h\|^2_{L_2(R_x)}\leq
\|g\|^2_{L_2(R_x)}+
\left [\|g_h-g\|_{L_2(R_x)}+\|g\|_{L_2(R_x)}\right ]^2\nonumber \\
&\leq& \varepsilon^2+\left [
 h^{1/2}+\varepsilon\right ]^2<
 2\left [
 h^{1/2}+\varepsilon\right ]^2.
 \nonumber
\end{eqnarray}
Similar statements hold if $R_x$ is replaced by $R\setminus(a+h,a)\times(c,d)$, $R\setminus(a,b)\times(d-h,d)$, or $R\setminus(a,b)\times(c,c+h)$.
\end{remark}

Now, we consider first the case of point evaluation.  In this case, any function $v$ appearing in the set of \eref{dense} is in $U(H^{3/2}(\Omega))$
and vanishes on a grid of points with spacing $h=m^{-1/2}$.  Standard Finite  Element estimation shows that
\be
\label{imp}
\|v\|_{H^1(\Omega_m)} \le C  m^{-1/4},
\ee
with $C$ an absolute constant. 
To extend the above estimate to $\Omega$, we use \eqref{e:extension} applied to appropriate rectangles obtained from $\Omega_m$ and with $h=m^{-1/2}$, $\varepsilon=Cm^{-1/4}$ (see \eref{imp}), and $g = v$, $g=\partial_xv$, or $g=\partial_yv$. Whence, we deduce that
$$
\|v\|_{H^1(\Omega)} \le C  m^{-1/4}
$$
for a different absolute constant $C$.



  We next consider the more intricate case of Gaussian measurements.
To prove \eref{dense},
we assume that \eref{dense} does not hold and derive a contradiction.  So, assume there is
a sequence of functions $v_m\in U(\cH^1(\Omega))\cap \cN_m$, $m=1,2, \dots $, that does not tend to $0$ in $H^1(\Omega)$.
This means that we can extract  an increasing sequence $(m_j)$ for which 
\be 
\label{subsequence}
\|v_{m_j}\|_{H^1(\Omega)} \ge \delta>0,\quad j=1,2,...,
\ee
for some $\delta>0$.
Since $\cH^1(\Omega)$ is continuously embedded into $H^{3/2}(\Omega)$, 
 and $H^{3/2}(\Omega)$ is compactly embedded in $H^s(\Omega)$ for each $1<s<\frac 3 2$, we can further extract a subsequence (which we continue to denote by $(m_j)$) such   that $v_{m_j}$
strongly converges towards a $v^*$ in $H^s(\Omega)$.  Note in particular that $v^*$ is a continuous function, in fact a Lip $s-1$ function on $\Omega$, and we have
\be 
\label{subsequence1}
\|v^*-v_{m_j}\|_{C(\Omega)}, \ \|v^*-  v_{m_j}\|_{H^1(\Omega)}\le A\| v^*-v_{m_j}\|_{H^s(\Omega)}  \to 0, \quad j\to\infty, 
\ee
with $A$ an absolute constant.  Hence, 
\be 
\label{norm1}  
\|v^*\|_{H^1(\Omega)}\ge \delta. 
\ee

Let us now examine how the measurement functionals act on any continuous function $v\in C(\Omega)$.  For any such $v $, we define
\be 
\label{functionals}
\tilde v:= g_r* \mathcal E v,  
\ee  
where 
$$
g_r(z):=\frac{1}{\sqrt{2\pi r^2}} e^{-\frac{1}2 \frac{| z|^2}{r^2}},
$$
is the Gaussian function used for the measurements and $ \mathcal E$ is the extension operator
by $0$ outside of $\Omega$.  The function $\tilde v$ is a continuous function on $\R^2$
and for any of our measurement functionals we have
\be 
\label{func2}
\lambda_{i,j}(v)=\tilde v(z_{i,j}), \quad 1\le i\le j.
\ee 
 
We claim that $\tilde v^*$ is identically zero on $\Omega$.  Indeed,  we have
\be 
\label{identzero}
|\tilde v^*(x)|\le |\tilde v^*(x)-\tilde v_{m_j}(x)|+ |\tilde v_{m_j}(x)-\tilde v_{m_j}(z)|+|\tilde v_{m_j}(z)|,\quad x,z\in\Omega.
\ee 
 Given any $x\in \Omega$ and $\e>0$, because of \eref{subsequence1}, we can make the first term in \eref{identzero} smaller than
 $\e/2$ for any sufficiently large $j$.  Then by again taking $j$ sufficiently large, there is a grid point $z$ for this $m_j$ that is  sufficiently close
 to $x$ so that the second term in \eref{identzero} is also smaller than $\e/2$.  Since $\tilde v_{m_j}(z)=0$ we see that
 $|\tilde v^*(x)|<\e$.  This proves that $\tilde v^*=0$.

Now that we know that $\tilde v^*$ is identically zero on $\Omega$, our final step is to show that this implies 
 that $v^*$ is identically zero on $\Omega$. Denoting by $\widehat f$ the Fourier transform of $f$, it follows that
$$
0=\int_\Omega \tilde v^* v^* =\int_{\R^2} \tilde v^*   \mathcal Ev^* = (2\pi)^{-2}\int_{\R^2}\hat g_r |\widehat{ \mathcal Ev^*}|^2.
$$
The positivity of $\hat g_r$ implies that $ \mathcal Ev^*=0$ and therefore $v^*=0$ on $\Omega$.  This contradicts \eref{norm1} and is our desired   contradiction.

Returning now to the general setting where $X$ and the measurement sets $\Lambda_m$ are general,  we will have that \eref{dense} holds only if the sets $\Lambda_m$ become  sufficiently dense in $X^*$ as $m$ gets large.  The precise rate of convergence of $R(K^\cH)$ towards $0$ will
depend on the prior class $\cH$ and on the choice of the 
 $\Lambda_m$, $m=1,2,\dots$. In the next section we discuss in more detail the choice of $\Lambda_m$, $m=1,2,\dots$,
in order to obtain the best achievable rate.

\section{Optimal data sites: Gelfand widths and sampling numbers}
\label{S:Gelfandwidths}

In this section, we make some comments on the number of measurements $m$ that are needed
to guarantee a prescribed error in the recovery of $u$.  Bounds on $m$ are known to be
governed by the Gelfand width for the case of general linear functionals and by sampling numbers
when the functionals are required to be point evaluations.  We explain what is known about these quantities for our specific model classes.  As we shall see these issues are not
completely settled for the model classes studied in this paper.  The problem of
finding the best choice of functionals, respectively point evaluations, is in need of further research.

We have seen that the accuracy of the optimal recovery of $u\in K_w$  is given by the Chebyshev radius $R(K_w):=R(K_w)_{H^1(\Omega)}$ or equivalently $R(K^\cH_w):=R(K^\cH_w)_{H^1(\Omega)}$ for the harmonic component.
The worst case recovery
error $R(K)$ over the class $K$ is defined by
\be 
R(K)_{H^1(\Omega)}:=\sup_{w\in\R^m}R(K_w)_{H^1(\Omega)},
\label{RK}
\ee 
Notice that this worst case error fixes the measurement functionals but allows the measurements $w$ to come from any function in $K$.
Both the individual error $R(K_w)$ and the worst case error $R(K)$ are very dependent on the choice of the data functionals $\lambda_j$.
For example, in the
case that these functionals  are point evaluations at points $z_1,\dots,z_m\in\bar \Omega$, then $R(K_w)$
and $R(K)$ will depend very much on the positioning of these points in $\bar\Omega$. 

In the case of general linear functionals, one may fix $m$ and then search for the $\lambda_1,\dots,\lambda_m$ that minimize  the worst case recovery error over
the class $K$. This minimal worst case error is called the {\it Gelfand width} of $K$. If we restrict the linear functionals to be given by    point evaluation, 
we could correspondingly search for the sampling points $x_1,\dots,x_m$ minimizing the worst case recovery error.  This  minimal 
error is called the {\it deterministic sampling number} of $K$. 

The goal of this section is not to
provide new results on Gelfand widths and sampling numbers, since we regard this as a separate issue in need of a systematic study, but to discuss what is known about them in our setting and refer the reader to the relevant papers.  
Let us recall that $R(K_w)$ is equivalent to $R(K^\cH_w)_{H^1}$ and so we restrict our discussion in what follows to sampling of harmonic
functions. 
  
\subsection {Optimal choice of functionals}
\label{SS:optimalfunctionals}  

Suppose we fix the number $m$ of observation to be allowed and ask  what is the optimal choice for the $\lambda_j$, $j=1,\dots,m$, and what is the optimal error of recovery for this choice.  The answer to the second question is given by the Gelfand width of $K$.  Given a compact set $K$ of  a Banach space $X$, we define the {\it Gelfand width} of $K$ in $X$ by    
\be 
\label{gelfand}
d^m(K)_{X}:= \inf_{\lambda_1,\dots,\lambda_m}  R(K)_X
\ee 
where the infimum is taken over the linear functionals defined on $X$. 
Let us mention that this definition 
differs from that employed in the classical literature \cite{P} where $d^m(K)_{X}$
is defined as the infimum over all spaces $F$ of codimension $n$
of $\max\{\|v\|_X \; : \; v\in K\cap F\}$. The two definitions are equivalent
in the case where $K$ is a centrally symmetric set such that $K+K\subset CK$ 
for some constant $C\geq 1$.

Any set of functionals which attains the infimum in \iref{gelfand} would be optimal.  
The Gelfand width is often used as a benchmark for performance since
it says that no matter how the $m$ functionals $\lambda_1,\dots,\lambda_m$ are chosen, 
the error of recovery of $u\in K$ cannot be better than $d^m(K)_X$.

When $X$ is a Hilbert space and $K$ is the ball of a Hilbert space $Y$ with compact embedding in $X$,
it is known that the Gelfand width coincides with the Kolmogorov width, that is
$$
d^m(K)_{X}=d_m(K)_X:=\inf_{\dim(E)=m} {\rm dist}(K,E)_X =\inf_{\dim(E)=m} \max\{\|v-P_E v\|_X \; : \; v\in K\},
$$
where the infimum is taken over all linear spaces $E$ of dimension $m$. This is precisely
our setting as  discussed in \S \ref{S: firstnumericalalgorithm}: taking $X=H^1:=H^1(\Omega)$ and $K$ as in \eref{classK}, 
we have
\be 
\label{equivalences}
d^m(K)_{H^1(\Omega)} = d^m(K^\cH)_{H^1(\Omega)}= d_m(K^\cH)_{H^1(\Omega)} \sim d_m(K^B)_{H^{1/2}(\Gamma)}=d^m(K^B)_{H^{1/2}(\Gamma)},  
\ee 
where the equivalence follows from \eref{LMbound}. Upper and  lower bounds 
for the Gelfand width of $K^B$ in $L_2(\Gamma)$ are explicitly given in \cite{Pez}. 

We can estimate the rate of decay of the Kolmogorov and Gelfand width of $K^B$ in $H^{1/2}(\Gamma)$ by 
the following general argument: as explained in \S\ref{SS:HsGamma}, for the admissible range of smoothness, the Sobolev spaces
$H^s(\Gamma)$ have an intrinsic description by locally mapping 
the boundary onto domains of $\R^{d-1}$. More precisely, in \cite{Nec} and \cite{Gr},
the $H^s(\Gamma)$ norm of $g$ is defined as
\be
\|g\|_{H^{s}(\Gamma)}:=\(\sum_{j=1}^J  \|g_j\|_{H^s(R_j)}^2\)^{1/2},
\label{sobnorm2}
\ee
where the $R_j$ are open bounded rectangles of $\R^{d-1}$
that are mapped by transforms $\gamma_j$ into portions $\Gamma_j$
that constitute a covering of $\Gamma$, and $g_j=g\circ \gamma_j$
are the local pullbacks. 

From this it readily follows that the Gelfand and Kolmogorov $m$-width
of the unit ball of $H^{s}(\Gamma)$ in the norm $H^{t}(\Gamma)$, with $0\leq t<s$
behaves similar to that of the unit ball of $H^{s}(R)$ in the norm $H^{t}(R)$ where $R$ is
a bounded rectangle of $\R^{d-1}$. The latter is known to behave
like $m^{-\frac{s-t}{d-1}}$. Therefore, for $K^\cH=U(\cH^s)$ with $s>\frac 1 2$ in the admissible range
allowed by the boundary smoothness, one has
\be
\label{gwkh}
c m^{-\frac{s-1/2}{d-1}}\le d^m(K^\cH)_{H^1(\Omega)}\le C m^{-\frac{s-1/2}{d-1}}, \quad m\ge 1,
\ee
where $c$ and $C$ are positive constants depending only on $\Omega$ and $s$.

\begin{remark} 
We have already observed in \S \ref{S:hs} that the space 
$\cH^s(\Omega)$ is continuously embedded in the Sobolev space
$H^r(\Omega)$ with $r:=\max\{s+\frac 1 2,r^*\}$ and in particular
$r=s+\frac 1 2$ for smooth domains. However the Gelfand 
and Kolmogorov widths of the unit ball of $H^r(\Omega)$ in $H^1(\Omega)$ 
have the slower decay rate $m^{-\frac{r-1}{d}}=m^{-\frac{s-1/2}{d}}$ 
compared to \iref{gwkh} for $\cH^s(\Omega)$. This improvement
reflects the fact that the functions from $\cH^s(\Omega)$ have $d$ variables
but are in fact determined by functions of $d-1$ variables.  The reduction in dimension from $d$ to $d-1$
is related to the fact that in our formulation of our problem we have complete knowledge of $f$.
\end{remark}

\subsection{Optimal choice of sampling points}
\label{SS:sampling}

We turn to the particular setting where 
the $\lambda_j$ are point evaluations functionals, 
$$
\lambda_j(v)=v(x_j),
$$
at $m$ points $x_j\in\o \Omega$.
Similar to the Gelfand width, the {\it deterministic sampling numbers} are defined as
\be 
\label{d:sampling}
\rho^m(K)_{X}:= \inf_{x_1,\dots,x_m}  R(K)_X,
\ee 
A variant of this is to measure the worst case expected
recovery error when the $m$ points are chosen at random according to a probability distribution
and search for the distribution that minimizes this error, leading to the 
{\it randomized sampling number} of $K$. Obviously, one has
\be
\rho^m(K)_{X}\geq d^m(K)_X.
\label{gelsa}
\ee

In the majority of the literature, deterministic and randomized sampling numbers are studied 
with error measured in the $L_2(\Omega)$ norm. In this setting, 
concrete strategies for optimal deterministic and randomized point design
have been given when $K$ is the unit ball 
of a reproducing kernel Hilbert space $H$ defined on $\Omega$.  In particular,
the recent results in \cite{MU,KU,NSU,DKU} show that under the assumption
$$
\sum_{m>0}|d^m(K)_{L^2(\Omega)}|^2<\infty,
$$
then, for all $t>\frac 1 2$,
$$
\sup_{m\geq 1}m^t d^m(K)_{L^2(\Omega)}<\infty \implies \sup_{m\geq 1}m^t \rho^m(K)_{L^2(\Omega)}<\infty.
$$
In words, under the above assumptions, optimal recovery in $L_2(\Omega)$ has the same algebraic convergence rate when 
using optimally chosen point values compared to an optimal choice of general linear functionals.

While similar general results have not been established for Gelfand width and sampling numbers
in the $H^1$ norm, we argue that they hold in our particular setting where $H=\cH^s(\Omega)$.
For simplicity, as in \S \ref{S:numerical}, we consider a domain that is either a polygon
when $d=2$ or polyhedron when $d=3$, and thus consider the range $\frac {d-1} 2<s<\frac 3 2$
where the restriction from  below ensures that $\cH^s(\Omega)\subset C(\o\Omega)$. Recalling
the finite element spaces ${\mathbb V}_h$ and their traces ${\mathbb T}_h$ on the boundary, 
based on quasi-uniform meshes $\{ \mathcal T_h\}_{h>0}$, we consider for a given $h>0$ the measurement points
$x_1,\dots,x_m$ that are the mesh vertices located on $\Gamma$. By the quasi-uniformity property
the number $m=m(h)$ of these points satisfies
$$
ch^{1-d} \leq m\leq C h^{1-d},
$$
for some $c,C>0$ independent of $h$. If $v\in \cH^s(\Omega)$, its trace $v_\Gamma$ belongs to
$H^s(\Gamma)$. Then, denoting by $I_h$ the piecewise linear interpolant on the boundary, 
standard finite element approximation theory ensures the estimate
$$
\|v_\Gamma-I_h v_\Gamma\|_{H^{1/2}(\Gamma)}\leq C h^{s-\frac 1 2}\|v_\Gamma\|_{H^s(\Gamma)}
=C h^{s-\frac 1 2}\|v\|_{\cH^s(\Omega)},
$$
for some $C$ that only depends on $s$. Therefore, introducing $\t v:=EI_h v$, one has
$$
\|v-\t v\|_{H^1(\Omega)}\leq C_E\|v_\Gamma-I_h v_\Gamma\|_{H^{1/2}(\Gamma)}
\leq CD_Em^{-\frac{s-1/2}{d-1}}\|v\|_{\cH^s(\Omega)}.
$$
Since $\t v$ only depends on the value of $v$ at the points $x_1,\dots,x_m$, we have thus proved
an upper bound of order $m^{-\frac{s-1/2}{d-1}}$ for $\rho^m(K^\cH)_{H^1(\Omega)}$, and
in turn for $\rho^m(K)_{H^1(\Omega)}$. In view of \iref{gelsa} and \iref{gwkh}, a lower bound
of the same order must hold. In summary, similar to the Gelfand widths, the sampling numbers satisfy
\be
\t c m^{-\frac{s-1/2}{d-1}}\le \rho^m(K)_{H^1(\Omega)}\le \t C m^{-\frac{s-1/2}{d-1}}, \quad m\ge 1,
\label{sampling}
\ee
where $\t c$ and $\t C$ are positive constants depending only on $\Omega$ and $s$.


\begin{thebibliography}{77}
 
 

\bibitem{Ad}
R. A. Adam and J. F. Fournier, {\it Sobolev spaces}, Elsevier, 2003.
 
\bibitem{AH}
B. Adcock and D. Huybrechs,
{\it Frames and numerical approximation}
SIAM Rev. 61(3), 443-473, 2019.

\bibitem{A} G. Auchmuty, 
{\it Reproducing Kernels for Hilbert Spaces of Real Harmonic Functions}, SINUM, {\bf 41}(5), 2009.




\bibitem{brunton2020machine}
S.L. Brunton, B.R. Noack, P. Koumoutsakos, {\it Machine Learning for Fluid Mechanics}, Annual Review of Fluid Mechanics {\bf 36}(9), 477--508, 2020.

\bibitem{CR}
P.G. Ciarlet and P.A. Raviart, 
{\it Maximum principle and uniform convergence for the finite element method},
Comput. Methods Appl. Mech. Engrg. {\bf 2}, 17—31, 1973.



\bibitem{devore2016data} R. DeVore, G. Petrova, and P. Wojtaszczyk, {\it Data assimilation in Banach spaces},
  arXiv preprint arXiv:1602.06342, 2016.

\bibitem{DKU} M. Dolbeault, D. Krieg and M. Ullrich, {\it A sharp upper bound for sampling numbers in 
$L^2$}, arXiv preprint arXiv:2204.12621, 2022.

\bibitem{DS} V. Domínguez and F.-J. Sayas, {\it Stability of discrete liftings}, 
C. R. Acad. Sci. Paris I {\bf 337}, 885-898, 2003.


\bibitem{duraisamy2019turbulence} 
K. Duraisamy, G. Iaccarino, and H. Xiao,  {\it Turbulence modeling in the age of data}, Annual Review of Fluid Mechanics {\bf 51}, 357--377, 2019.

\bibitem{formaggia2002numerical} 
L. Formaggia, J.-F. Gerbeau, F. Nobile, and A. Quarteroni,  {\it Numerical treatment of defective boundary conditions for the Navier--Stokes equations}, SIAM Journal on Numerical Analysis {\bf 40}(1), 376--401, 2002.


\bibitem{GI} A. N. Galybin  and  J. Ir\u sa, {\it On reconstruction of three-dimensional
harmonic functions from discrete data}, Proc. R. Soc. A {\bf 466}(2010), 1935–-1955.
   
\bibitem{Gr}
P. Grisvard, {\it Elliptic problems in non-smooth domains}, Pitman, 1985. 

\bibitem{grinberg2008outflow} 
L. Grinbergand G.E. Karniadakis,  {\it Outflow boundary conditions for arterial networks with multiple outlets}, Annals of biomedical engineering {\bf 51}, 1496--1514, 2008.

  

\bibitem{KU} 
D. Krieg and  M. Ullrich, {\it  Function Values Are Enough for $L_2$-Approximation }, JFoCM  {\bf 21}, 1141--1151, 2021.

\bibitem{LL}
D. Leykekhman and B. Li, {\it Weak discrete maximum principle of finite element methods in convex polyhedra},
Math. of Comp. {\bf 90}, 1-18, 2021.

\bibitem{MeM} A. Melkman and C. Micchelli, {\it Optimal Estimation of Linear Operators in Hilbert Spaces from Inaccurate Data}
SIAM Journal on Numerical Analysis {\bf 16}, 87-105, 1979.

\bibitem{MiR}   
C. Micchelli and T. Rivlin, {\it  A Survey of Optimal Recovery},  In: Micchelli C.A., Rivlin T.J. (eds) Optimal Estimation in Approximation Theory, 1--54, 1977.

\bibitem{MU}
M Moeller and T Ullrich, {\it $L_2$-norm sampling discretization and recovery of functions from RKHS with finite trace}
Sampling Theory, Signal Processing, and Data Analysis 19, 1-31, 2021.


\bibitem{Nec} J. Necas, {\it Direct methods in the Theory of Elliptic Equations} (english translation of the 1967 monograph), Springer 2012.


\bibitem{NSU} N Nagel, M Schäfer, T Ullrich, {\it A new upper bound for sampling numbers},
Foundations of Computational Mathematics 22, 445-468, 2022.

\bibitem{NW} 
E. Novak and H. Wozniakowski, {\it Tractability of Multivariate Problems}, Volume I: Linear Information. EMS Tracts in Mathematics, Vol. 6, Eur. Math. Soc. Publ. House, Z\"urich, 2008.

\bibitem {Pez}
I. Pesenson, Estimates of Kolmogorov, Gelfand, and linear widths on compact Riemannian manifolds,
Proceedings AMS, {\bf 144},   2985–2998, 2016.

 \bibitem{P} 
 A. Pinkus, {\it N-widths in Approximation Theory}, Vol. 7 of A Series of Modern Surveys in Mathematics, Springer Science \& Business Media, 2012.



\bibitem{raj}
K. Rajagopal, {\it On boundary conditions for fluids of the differential type}, in: A. Sequeira, Ed., Navier- Stokes Equations and Related Non-linear Problems, Plenum Press, New York, pp. 273-278, 1995.

\bibitem{richards2011appropriate} 
P.J. Richards, S.E. Norris, {\it Appropriate boundary conditions for computational wind engineering models revisited}, Journal of Wind Engineering and Industrial Aerodynamics {\bf 99} (4), 257--266, 2011.

\bibitem{richards2019appropriate} 
P.J. Richards, S.E. Norris, {\it Appropriate boundary conditions for computational wind engineering: Still an issue after 25 years}, Journal of Wind Engineering and Industrial Aerodynamics {\bf 190}, 245--255, 2019.
  

\bibitem{Schatz}
A.H. Schatz, 
{\it A Weak discrete maximum principle and stability of the finite element method in $L_\infty$
on plane polygonal domains. I}, Math. of Comp. {\bf 34}, 77-91, 1980.


\bibitem{SZ}
L. R. Scott and Sh. Zhang, 
{\it Finite element interpolation of nonsmooth functions satisfying boundary conditions},
Mathematics of computation, {\bf 54} (190), 483-493, 1990.



 
\bibitem{TW} 
J. Traub and H. Wozniakowski, {\it A General Theory of Optimal Algorithms}, Academic Press, 1980.
 
 
 
\bibitem{xu2021explore}
Hui Xu, Wei Zhang, Yong Wang, {\it Explore missing flow dynamics by physics-informed deep learning: The parameterized governing systems}, Physics of Fluids {\bf 33}, 095116, 2021.

\bibitem{yosida} 
K. Yosida, {\it Functional analysis}, Springer Science \& Business Media, 2012.
\vskip 1in



\noindent
Peter Binev, Department of Mathematics, University of South Carolina, Columbia,  SC 29208, binev@math.sc.edu
\vskip .1in

\noindent
Andrea Bonito, Department of Mathematics, Texas A\&M University, College Station, TX 77843, bonito@math.tamu.edu
\vskip .1in
\noindent
Albert Cohen, Laboratoire Jacques-Louis Lions,  Sorbonne Universi\'e, 4, Place Jussieu, 75005 Paris,  France, albert.cohen@sorbonne-universite.fr
\vskip .1in


\noindent
Wolfgang Dahmen, Department of Mathematics, University of South Carolina, Columbia,  SC 29208, dahmen@math.sc.edu
\vskip .1in

\noindent
Ronald DeVore, Department of Mathematics, Texas A\&M University, College Station, TX 77843, rdevore@math.tamu.edu
\vskip .1in

\noindent
Guergana Petrova, Department of Mathematics, Texas A\&M University, College Station, TX 77843, gpetrova@math.tamu.edu
\vskip .1in





\end{thebibliography}
\end{document}